  \theoremstyle{plain}
    \newtheorem{thm}{Theorem}[section]
    \newtheorem{prop}[thm]{Proposition}
   \newtheorem{lemma}[thm]{Lemma}
    \newtheorem{corollary}[thm]{Corollary}
    \newtheorem{subsec}[thm]{}
\theoremstyle{definition}
    \newtheorem{defn}[thm]{Definition}
        \newtheorem{remark}[thm]{Remark}
    \newtheorem{exam}[thm]{Example}
\theoremstyle{remark}
\title{}
\author{}
\date{}
\begin{document}
\title{Extensions and deformations of algebras with higher derivations}

\author{Apurba Das}
\address{Department of Mathematics and Statistics,
Indian Institute of Technology, Kanpur 208016, Uttar Pradesh, India.}
\email{apurbadas348@gmail.com}

\curraddr{}
\email{}

\subjclass[2010]{}
\keywords{Higher derivations, AssHDer pairs, Hochschild cohomology, Extensions, Formal deformations}

\begin{abstract}
Higher derivations on an associative algebra generalizes higher order derivatives. We call a tuple consisting of an algebra and a higher derivation on it by an AssHDer pair. We define a cohomology for AssHDer pairs with coefficients in a representation. Next, we study central extensions, abelian extensions of AssHDer pairs and relate them to the second cohomology group with appropriate coefficients. Deformations of AssHDer pairs are also considered which are governed by the cohomology with self coefficient.
\end{abstract}

\noindent

\thispagestyle{empty}

\maketitle

\tableofcontents


\section{Introduction}
Let $A$ be an associative algebra. A linear map $d : A \rightarrow A$ is called a derivation on $A$ if $d$ satisfies $d(ab) = d(a) b + a d(b)$, for all $a, b \in A$. Derivations are generalization of usual derivatives. They are important object to study algebras. They are also useful to construct deformation formulas \cite{coll}, homotopy Lie algebras \cite{voro} and they have applications in control theory, gauge theories in quantum field theory \cite{ ref-1,ref-2 } and differential Galois theory \cite{magid}. Algebras with derivations are studied from operadic point of view in \cite{loday}. Recently, Lie algebras with derivations (called LieDer pairs) are studied from cohomological perspectives and extension, deformation of LieDer pairs are considered \cite{tang-f-sheng}. These results has been extended to associative algebras and Leibniz algebras with derivations (AssDer pairs and LeibDer pairs) in \cite{das-mandal, das-leibder}.

If $d$ is a derivation on $A$, then the sequence of maps $\{ d_1, \ldots, d_N \}$ on $A$ defined by $d_k = \frac{d^k}{k!}$ satisfy the following relations
\begin{align}\label{eq-first}
d_k (ab) = \sum_{i+j = k} d_i(a) d_j (b),
\end{align}
for $k = 1, \ldots, N$, with the convention that $d_0 = \mathrm{id}_A$. This motivates the notion of higher derivations. More precisely, a finite sequence $\{ d_1, \ldots, d_N \}$ of linear maps on $A$ is said to be a higher derivation of rank $N$ on $A$ if they satisfy the identities (\ref{eq-first}). Higher derivations were introduced by Hasse and Schmidt \cite{hasse}, therefore, they are also called {\em Hasse-Schmidt} derivations. Not all higher derivations arise from derivations in the above way. In \cite{abu, mirz} the authors proved that each higher derivation on $A$ is a combination of compositions of derivations, hence, characterize all higher derivations in terms of derivations on $A$. Higher derivations on triangular algebras are considered in \cite{wei}. See \cite{hers , csl , jordan} for more results about higher derivations. Higher derivations are useful in theory of automorphisms of complete local rings. In fact, higher derivations can be applied to the Galois theory of fields, to universal higher derivations and to separability criteria \cite{davis, weis}.

An algebra $A$ together with a distinguished higher derivation $\{ d_1, \ldots, d_N \}$ is called an AssHDer pair. In this paper, we study AssHDer pairs from cohomological perspectives. We define representations of AssHDer pairs and construct the semi-direct product. Higher derivations on Lie algebras are also studied. The standard commutator gives rise to a functor from the category of AssHDer pairs to the category of LieHDer pairs. The universal enveloping algebra construction gives rise to a functor left adjoint to the above functor. We define cohomology of AssHDer pairs with coefficients in a representation. This is a generalization of the cohomology of AssDer pairs introduced in \cite{das-mandal}.

Next we study central extension of AssHDer pairs and show that isomorphism classes of central extensions are classified by the second cohomology group of AssHDer pair with trivial representation (Theorem \ref{cent-thm}). We also consider abelian extensions of an AssHDer pair and show that they are classified by the second cohomology with arbitrary representation (Theorem \ref{abel-cla}). The classical deformation theory of Gerstenhaber \cite{gers} has been extended to AssDer pairs in \cite{das-mandal}. Here we further extend this approach to AssHDer pairs. The vanishing of the second cohomology of an AssHDer pair implies that the structure is rigid (Theorem \ref{2-thm}). Given a finite order deformation, we associate a third cocycle, called the obstruction cocycle (Proposition \ref{3-thm}). When this cocycle is a coboundary, the deformation extends to a deformation of next order. As a consequence, the vanishing of the third cohomology implies that any finite order deformation is always extensible (Theorem \ref{final-thm}).

We end this paper with some further discussions. All vector spaces and linear maps in this paper are over a field $\mathbb{K}$ of characteristic zero.

\section{Higher derivations and AssHDer pairs}
In this section, we recall higher derivations and construct free higher derivation on a vector space with linear maps. We also define higher derivations on Lie algebras and give some functorial constructions.

Let $A$ be an associative algebra.

\begin{defn}
A {\em higher derivation (of rank $N$)} on $A$ is a finite sequence $\{ d_1, \ldots, d_N \}$ of linear maps on $A$ satisfying the following identities
\begin{align}\label{hder-id}
d_k (ab) = \sum_{i+j = k} d_i (a) d_j (b),
\end{align}
for $ k = 1, \ldots, N$ and $a, b \in A,$ with the convention that $d_0 = \mathrm{id}_A$.

It follows from (\ref{hder-id}) that $d_1$ is a derivation on $A$. In particular, if $d$ is a derivation on $A$ then $\{ d \} $ is a higher derivation of rank $1$. In fact, if $d$ is a derivation on $A$ then $\{ d_1 = 0, \ldots, d_i = d, \ldots, d_N = 0 \}$ is a higher derivation of rank $N$.
\end{defn}

\begin{remark}\label{higher-der-as-mor}
Note that, if $A$ is an associative algebra, then $A[[t]]/ (t^{N+1})$ has an associative algebra structure with the product induced from that of $A$. Then a higher derivation on $A$ is same as a morphism of algebras
\begin{align*}
 A \rightarrow A[[t]]/ (t^{N+1}), ~~ a \mapsto a + d_1 (a) t + \cdots + d_N (a) t^N.
\end{align*}
\end{remark}

An associative algebra $A$ together with a higher derivation of rank $N$ is called an AssHDer pair of rank $N$. An AssHDer pair as above may be denoted by the tuple $(A, d_1, \ldots, d_N)$.

\begin{exam}
\begin{itemize}
\item Let $d$ be a derivation on $A$. Then $\{ d, \frac{d^2}{2 !} , \ldots, \frac{d^N}{N !} \}$ is a higher derivation of rank $N$. Such higher derivations are called ordinary higher derivations. Thus, in the polynomial algebra $A = \mathbb{K}[x_1, \ldots, x_n ]$, the divided power operators $\{ \frac{\partial}{\partial x_i}, \frac{1}{2!} \frac{\partial^2}{\partial^2 x_i}, \ldots, \frac{1}{N!} \frac{\partial^N}{\partial^N x_i} \}$ forms a higher derivation of rank $N$ on $A$.
This example shows that higher derivations are generalization of higher derivatives. 
\item Let $A$ be an associative algebra and $x \in A$ be any fixed element. Then the maps
$$d_n (a) := x^{n-1} (xa - ax), ~~~ \text{ for } a \in A \text{ and } 1 \leq n \leq N$$
forms a higher derivation of rank $N$ \cite{now}.
\item Let $\{ d_1, \ldots, d_N \}$ be a higher derivation on $A$. For any $1 \leq q \leq N$, we define a new sequence $\{d_1', \ldots, d_N' \}$ of linear maps by
\begin{align*}
d_k' = \begin{cases} 0 & ~~~ \text{ if } q \nmid k,\\
d_s  & ~~~ \text{ if } k = sq. \end{cases}
\end{align*}
Then $\{d_1', \ldots, d_N'\}$ is a higher derivation of rank $N$ \cite{now, wei}.
\item Let $A$ be an unital associative algebra. Suppose there exist two sequences $\{ x_1, \ldots, x_N \}$ and $\{ y_1, \ldots, y_N \}$ of elements in $A$ satisfying the conditions $\sum_{i=0}^n x_i y_{n-i} = \delta_{n0} = \sum_{i=0}^n y_i x_{n-i}$ (with the convention that $x_0 = y_0 = 1_A$). Then the maps $d_n : A \rightarrow A, ~ 1 \leq n \leq N$, defined by
$$ d_n (a) = \sum_{i=0}^n x_i a y_{n-i}$$
forms a higher derivation of rank $N$. This kind of higher derivations are called inner higher derivations \cite{wei}.
\end{itemize}
\end{exam}

\begin{defn}
Let $(A, d_1, \ldots, d_N )$ and $(A', d_1', \ldots, d_N' )$ be two AssHDer pairs of rank $N$. A morphism between them is given by an algebra morphism $f : A \rightarrow A'$ satisfying
\begin{align*}
d_k' \circ f =  f \circ d_k, ~~~ \text{ for } k = 1, \ldots, N.
\end{align*}
It is called an isomorphism if $f$ is an isomorphism.
\end{defn}

\medskip

Let $V$ be a vector space and $d : V \rightarrow V$ be a linear map. Then $d$ induces a derivation $\overline{d}$ on the tensor algebra $T(V)$ (resp. reduced tensor algebra $(\overline{T}(V)$) by
\begin{align*}
\overline{d} ( v_1 \otimes \cdots \otimes v_n) = \sum_{i=1}^n v_1 \otimes \cdots \otimes dv_i \otimes \cdots \otimes v_n.
\end{align*}
The pair $(T(V), \overline{d})$ is infact a free AssDer pair over $(V, d)$. See \cite{das-mandal} for details. Here we extend this construction for AssHDer pairs.

Let $(V, \vartheta_1, \ldots, \vartheta_N)$ be a vector space together with linear maps. These linear maps induce maps $\overline{\vartheta}_1, \overline{\vartheta}_2, \ldots, \overline{\vartheta}_N$ on the tensor algebra $T(V)$ as
\begin{align*}
&\overline{\vartheta}_1 ( v_1 \otimes \cdots \otimes v_n) = \sum_{i=1}^n v_1 \otimes \cdots \otimes \vartheta_1 v_i \otimes \cdots \otimes v_n,\\
&\overline{\vartheta}_2 (v_1 \otimes \cdots \otimes v_n) = \sum_{i=1}^n v_1 \otimes \cdots \otimes \vartheta_2 v_i \otimes \cdots \otimes v_n + \sum_{i < j} v_1 \otimes \cdots \otimes \vartheta_1 v_i \otimes \cdots \otimes \vartheta_1 v_j \otimes \cdots \otimes v_n,\\
&\overline{\vartheta}_3 (v_1 \otimes \cdots \otimes v_n) = \sum_{i=1}^n v_1 \otimes \cdots \otimes \vartheta_3 v_i \otimes \cdots \otimes v_n + \sum_{i < j} v_1 \otimes \cdots \otimes \vartheta_2 v_i \otimes \cdots \otimes \vartheta_1 v_j \otimes \cdots \otimes v_n \\
& \qquad \qquad \qquad \qquad + \sum_{i < j} v_1 \otimes \cdots \otimes \vartheta_1 v_i \otimes \cdots \otimes \vartheta_2 v_j \otimes \cdots \otimes v_n \\
& \qquad \qquad \qquad \qquad + \sum_{i < j < k} v_1 \otimes \cdots \otimes \vartheta_1 v_i \otimes \cdots \otimes \vartheta_1 v_j \otimes \cdots \otimes \vartheta_1 v_k \otimes \cdots \otimes v_n,\\
&\vdots\\
& \overline{\vartheta}_k (v_1 \otimes \cdots \otimes v_n) = \sum_{s=1}^n \sum_{p_1 + \cdots + p_s = k} \sum_{i_1 < \cdots < i_s} v_1 \otimes \cdots \otimes \vartheta_{p_1} v_{i_1} \otimes \cdots \otimes \vartheta_{p_2} v_{i_2} \otimes \cdots \otimes \vartheta_{p_s} v_{i_s} \otimes \cdots \otimes v_n,\\
&\vdots
\end{align*} 
for $v_1 \otimes \cdots \otimes v_n \in T(V).$ With these notations, we have the following.
\begin{prop}
The tuple $(T(V), \overline{\vartheta}_1, \ldots, \overline{\vartheta}_N)$ is an AssHDer pair of rank $N$. 
\end{prop}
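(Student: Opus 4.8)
The plan is to sidestep the messy direct verification of the identities \eqref{hder-id} by using the universal property of the tensor algebra together with the reformulation of higher derivations in Remark \ref{higher-der-as-mor}. Recall that $B := T(V)[[t]]/(t^{N+1})$ is an associative (unital) algebra with product induced from $T(V)$. First I would consider the linear map
\[
\Phi : V \longrightarrow B, \qquad \Phi(v) = v + \vartheta_1(v)\, t + \cdots + \vartheta_N(v)\, t^N .
\]
Since $T(V)$ is the free (unital) associative algebra on $V$, the map $\Phi$ extends uniquely to a unital algebra homomorphism $\widetilde{\Phi} : T(V) \to B$. It therefore suffices to identify this homomorphism with the map $x \mapsto x + \overline{\vartheta}_1(x)\, t + \cdots + \overline{\vartheta}_N(x)\, t^N$, for then Remark \ref{higher-der-as-mor} gives at once that $\{\overline{\vartheta}_1,\ldots,\overline{\vartheta}_N\}$ is a higher derivation of rank $N$ on $T(V)$.

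To carry this out, I would evaluate $\widetilde{\Phi}$ on a monomial. Writing $\vartheta_0 := \mathrm{id}_V$ and using multiplicativity,
\[
\widetilde{\Phi}(v_1 \otimes \cdots \otimes v_n) = \widetilde{\Phi}(v_1) \cdots \widetilde{\Phi}(v_n) = \prod_{i=1}^{n} \Big( \sum_{p=0}^{N} \vartheta_p(v_i)\, t^p \Big),
\]
and expanding this product modulo $t^{N+1}$ shows that the coefficient of $t^k$ equals
\[
\sum_{\substack{p_1 + \cdots + p_n = k \\ p_i \ge 0}} \vartheta_{p_1}(v_1) \otimes \cdots \otimes \vartheta_{p_n}(v_n).
\]
Now an index $i$ with $p_i = 0$ contributes the unchanged factor $v_i$; collecting the indices $i_1 < \cdots < i_s$ at which $p_i \ge 1$ and relabelling the corresponding exponents, this coefficient becomes precisely $\sum_{s=1}^{n} \sum_{p_1 + \cdots + p_s = k} \sum_{i_1 < \cdots < i_s} v_1 \otimes \cdots \otimes \vartheta_{p_1} v_{i_1} \otimes \cdots \otimes \vartheta_{p_s} v_{i_s} \otimes \cdots \otimes v_n$, which is exactly $\overline{\vartheta}_k(v_1 \otimes \cdots \otimes v_n)$ for $1 \le k \le N$ (the $k=0$ term being the identity, and both sides being $0$ on the degree-$0$ part). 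Hence $\widetilde{\Phi}(x) = x + \overline{\vartheta}_1(x)\, t + \cdots + \overline{\vartheta}_N(x)\, t^N$ for all $x \in T(V)$.

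Since $\widetilde{\Phi}$ is an algebra homomorphism $T(V) \to T(V)[[t]]/(t^{N+1})$ of exactly the shape appearing in Remark \ref{higher-der-as-mor}, we conclude that $(T(V), \overline{\vartheta}_1, \ldots, \overline{\vartheta}_N)$ is an AssHDer pair of rank $N$, as claimed. The only genuine work here is the bookkeeping in the displayed expansion — matching the coefficients of the product with the explicit definition of $\overline{\vartheta}_k$ — which I expect to be the main (purely combinatorial) obstacle. An alternative organisation of the same computation is a direct induction on $n$ based on the recursion $\overline{\vartheta}_k(x \otimes v) = \sum_{i+j=k} \overline{\vartheta}_i(x) \otimes \vartheta_j(v)$ (with $\overline{\vartheta}_0 = \mathrm{id}$, $\vartheta_0 = \mathrm{id}_V$), which follows immediately once $\widetilde{\Phi}$ is known to be multiplicative, or can be read off directly from the defining formulas; either route then yields the identities \eqref{hder-id} by a short computation.
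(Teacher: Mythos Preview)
The paper states this proposition without proof, so there is no argument to compare against. Your proof is correct and complete: invoking the universal property of $T(V)$ to produce the algebra map $\widetilde{\Phi}:T(V)\to T(V)[[t]]/(t^{N+1})$, then identifying its $t^k$-coefficient with $\overline{\vartheta}_k$, is exactly the right way to exploit Remark~\ref{higher-der-as-mor} and bypasses a direct check of the identities~\eqref{hder-id} on pairs of monomials.

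Two small remarks. First, in your matching of the coefficient of $t^k$ with the paper's formula for $\overline{\vartheta}_k$, it is worth making explicit that after singling out the positions $i_1<\cdots<i_s$ with nonzero exponent, the relabelled exponents $p_1,\ldots,p_s$ satisfy $p_j\geq 1$; this is implicit in the paper's expression (as one sees from the cases $k=1,2,3$ written out there) but is the point on which the identification hinges. Second, the alternative you mention at the end---verifying the recursion $\overline{\vartheta}_k(x\otimes v)=\sum_{i+j=k}\overline{\vartheta}_i(x)\otimes\vartheta_j(v)$ directly from the defining formula and then inducting on the degree of the right-hand factor---is essentially what a head-on verification of~\eqref{hder-id} would amount to; your route via $\widetilde{\Phi}$ packages the same combinatorics more cleanly and has the advantage of making the subsequent freeness statement (Proposition~\ref{free-asshd}) almost immediate.
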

In the following, we show that it is a free AssHDer pair in a certain sense.

Let $(V, \vartheta_1, \ldots, \vartheta_N)$ be a vector space with linear maps. The free AssHDer pair over it is an AssHDer pair $(\mathcal{F}(V), \delta_1, \ldots, \delta_N)$ with a linear map $i : V \rightarrow \mathcal{F}(V)$ satisfying $\delta_k \circ i = i \circ \vartheta_k$, for $k = 1, \ldots, N$ and satisfying the following universal conditions:
\begin{center}
 For any AssHDer pair $(A, d_1, \ldots, d_N)$ and a linear map $f : V \rightarrow A$ satisfying $d_k \circ f = f \circ \vartheta_k$, there exists an unique AssHDer pair morphism $\widetilde{f} : \mathcal{F}(V) \rightarrow A$ satisfying $\widetilde{f} \circ i = f.$
\end{center}

\begin{prop}\label{free-asshd}
Let $(V, \vartheta_1, \ldots, \vartheta_N)$ be a vector space with linear maps. Then $(T(V), \overline{\vartheta}_1, \ldots, \overline{\vartheta}_N)$ together with the inclusion map $i : V \rightarrow T(V)$ is a free AssHDer pair over $(V, \vartheta_1, \ldots, \vartheta_N)$.
\end{prop}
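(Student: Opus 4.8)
The plan is to reduce everything to the universal property of $T(V)$ as the free associative algebra on $V$, combined with the reformulation of higher derivations given in Remark \ref{higher-der-as-mor}. First one checks the compatibility $\overline{\vartheta}_k \circ i = i \circ \vartheta_k$ for $k = 1, \ldots, N$: evaluating the defining formula for $\overline{\vartheta}_k$ on a single generator $v \in V$ (the case $n = 1$), the only surviving term is $\vartheta_k v$, so this is immediate. Thus $(T(V), \overline{\vartheta}_1, \ldots, \overline{\vartheta}_N)$ together with $i$ is a candidate free object, and the content of the proposition is the existence and uniqueness of the induced morphism.

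Fix an AssHDer pair $(A, d_1, \ldots, d_N)$ and a linear map $f : V \rightarrow A$ with $d_k \circ f = f \circ \vartheta_k$ for all $k$. Since $T(V)$ is the free associative algebra on $V$, $f$ extends uniquely to an algebra morphism $\widetilde{f} : T(V) \rightarrow A$ with $\widetilde{f} \circ i = f$, namely $\widetilde{f}(v_1 \otimes \cdots \otimes v_n) = f(v_1) \cdots f(v_n)$. Uniqueness of $\widetilde{f}$ as an AssHDer morphism is then automatic, because any AssHDer morphism extending $f$ is in particular an algebra morphism extending $f$. So the whole task is to prove $d_k \circ \widetilde{f} = \widetilde{f} \circ \overline{\vartheta}_k$ for $k = 1, \ldots, N$.

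To do this I would pass to truncated polynomials. Set $R = \mathbb{K}[t]/(t^{N+1})$, and for an algebra $B$ identify $B[[t]]/(t^{N+1}) = B \otimes R$. By Remark \ref{higher-der-as-mor} the higher derivations on $A$ and on $T(V)$ are the algebra morphisms $\phi_A : A \rightarrow A \otimes R$, $a \mapsto a + d_1(a) t + \cdots + d_N(a) t^N$, and $\phi_{T(V)} : T(V) \rightarrow T(V) \otimes R$, $a \mapsto a + \overline{\vartheta}_1(a) t + \cdots + \overline{\vartheta}_N(a) t^N$ (the latter being an algebra morphism by the Proposition immediately preceding this one). Now consider the linear map $g : V \rightarrow A \otimes R$, $g(v) = f(v) + f(\vartheta_1 v) t + \cdots + f(\vartheta_N v) t^N$, and its unique extension to an algebra morphism $\widetilde{g} : T(V) \rightarrow A \otimes R$. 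Both $(\widetilde{f} \otimes \mathrm{id}_R) \circ \phi_{T(V)}$ and $\phi_A \circ \widetilde{f}$ are algebra morphisms $T(V) \rightarrow A \otimes R$, and on a generator $v \in V$ the first gives $f(v) + f(\vartheta_1 v) t + \cdots$ (using $\overline{\vartheta}_k v = \vartheta_k v$) while the second gives $f(v) + d_1(f(v)) t + \cdots = f(v) + f(\vartheta_1 v) t + \cdots$ (using $d_k f = f \vartheta_k$); both therefore equal $\widetilde{g}$ by uniqueness of algebra extensions, hence equal each other. Reading off the coefficient of $t^k$ in $(\widetilde{f} \otimes \mathrm{id}_R) \circ \phi_{T(V)} = \phi_A \circ \widetilde{f}$ yields precisely $\widetilde{f} \circ \overline{\vartheta}_k = d_k \circ \widetilde{f}$, which completes the proof.

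The only place where genuine work is hidden is the claim that the formulas for the $\overline{\vartheta}_k$ assemble into the algebra morphism $\phi_{T(V)}$ — equivalently, the combinatorial identity behind the higher Leibniz rule for $\overline{\vartheta}_k$ on a tensor $v_1 \otimes \cdots \otimes v_n$ — but this is exactly the preceding Proposition, which we are entitled to invoke, after which the argument is purely formal. If one prefers to bypass Remark \ref{higher-der-as-mor}, the same identity $d_k(\widetilde{f}(w)) = \widetilde{f}(\overline{\vartheta}_k(w))$ can be obtained directly by induction on the tensor length of $w$: write $w = w' \otimes v_n$, apply the higher Leibniz rule (\ref{hder-id}) for $d_k$ in $A$ together with the inductive hypothesis and the base case on $V$, and match the resulting sum $\sum_{i+j=k} \widetilde{f}(\overline{\vartheta}_i w')\, f(\vartheta_j v_n)$ against the defining formula for $\overline{\vartheta}_k(w)$; keeping track of the index sets $p_1 + \cdots + p_s = k$ with $i_1 < \cdots < i_s$ is the only delicate bookkeeping, and it is the step I expect to be the main obstacle.
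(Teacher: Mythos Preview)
The paper does not actually supply a proof of this proposition: it is stated and the text moves immediately to the next subsection. Your argument is correct and fills this gap. The route via Remark \ref{higher-der-as-mor} is particularly clean --- once the preceding Proposition guarantees that $\phi_{T(V)}$ is an algebra morphism, the desired commutation $d_k \circ \widetilde{f} = \widetilde{f} \circ \overline{\vartheta}_k$ reduces to the uniqueness of algebra morphisms out of a free algebra, and no explicit combinatorics with the index sets $p_1 + \cdots + p_s = k$, $i_1 < \cdots < i_s$ is needed. Your alternative induction on tensor length also works and is presumably what the author had in mind, but the truncated-polynomial version is more conceptual and sidesteps the bookkeeping you flag as the main obstacle.
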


\subsection{Higher derivations on Lie algebras} Let $(\mathfrak{g}, [~, ~])$ be a Lie algebra. A derivation on $\mathfrak{g}$ is given by a linear map $\phi_\mathfrak{g} : \mathfrak{g} \rightarrow \mathfrak{g}$ satisfying
\begin{align*}
\phi_\mathfrak{g} [x,y] = [\phi_\mathfrak{g}x, y] + [ x, \phi_\mathfrak{g} y ], ~~~ \text{ for all } x, y \in \mathfrak{g}.
\end{align*}
In \cite{tang-f-sheng} the authors call the pair $(\mathfrak{g}, \phi_\mathfrak{g})$ of a Lie algebra and a derivation by a LieDer pair. Here we introduce higher derivations on a Lie algebra similar to the case of associative algebras.

\begin{defn}
A {\em higher derivation (of rank $N$)} on a Lie algebra $(\mathfrak{g}, [~,~])$ consists of a finite sequence $\{ \phi_1, \ldots, \phi_N \}$ of linear maps on $\mathfrak{g}$ satisfying
\begin{align*}
\phi_k [x, y] = \sum_{i+j = k} [\phi_i (x), \phi_j (y)], ~~~ \text{ for } k =1, \ldots, N \text{ and } x, y \in \mathfrak{g}.
\end{align*}
\end{defn}
We call the tuple $(\mathfrak{g}, \phi_1, \ldots, \phi_N)$ of a Lie algebra and a higher derivation on it by a LieHDer pair of rank $N$. Morphism between LieHDer pairs of same rank can be defined similarly. We denote the category of LieHDer pairs by {\bf LieHDer}.

Let $(A, d_1, \ldots, d_N)$ be an AssHDer pair. Consider the Lie algebra structure on $A$ given by the commutator bracket $[a, b]_c = ab - ba$, for $a,b \in A$. We denote this Lie algebra by $A_c$. It can be easily verify that the sequence $\{ d_1, \ldots, d_N \}$ is a higher derivation of rank $N$ on the Lie algebra $A_c$ as
\begin{align*}
d_k [a, b]_c = d_k (ab -ba) =~& \sum_{i+j = k} ( d_i (a) d_j (b) - d_j (b) d_i (a) )\\
=~& \sum_{i+j = k} [d_i (a), d_j (b) ], ~~~ \text{ for } k = 1, \ldots, N.
\end{align*}
Therefore, it defines a functor $(~)_c : {\bf AssHDer} \rightarrow {\bf LieHDer}$ from the category of AssHDer pairs to the category of LieHDer pairs. In the following, we construct a functor $U : {\bf LieHDer} \rightarrow {\bf AssHDer}$ left adjoint to the functor $(~)_c$.

Let $(\mathfrak{g}, \phi_1, \ldots, \phi_N)$ be a LieHDer pair of rank $N$. Consider the universal enveloping associative algebra $U (\mathfrak{g})$ of $\mathfrak{g}$. By definition, $U(\mathfrak{g})$ is the quotient of the tensor algebra $T(\mathfrak{g})$ by the two sided ideal generated by the elements of the form $x \otimes y - y \otimes x - [x, y]$, for $x, y \in \mathfrak{g}$. Note that the linear maps $\overline{\phi}_1, \ldots, \overline{\phi}_N : T(\mathfrak{g}) \rightarrow T(\mathfrak{g})$ (see Proposition \ref{free-asshd}) induces maps $\overline{\overline{\phi}}_1, \ldots, \overline {\overline {\phi}}_N $ on $U(\mathfrak{g})$ as
\begin{align*}
\overline{\phi}_k ( x \otimes y - y \otimes x - [x, y] ) = \sum_{i+j = k} ( \phi_i (x) \otimes \phi_j (y) - \phi_j (y) \otimes \phi_i (x) - [ \phi_i (x), \phi_j (y)] ).
\end{align*}
Moreover, the maps $\{ \overline{\overline{\phi}}_1, \ldots, \overline {\overline {\phi}}_N  \}$ constitute a higher derivation on $U(\mathfrak{g}).$ In other words, $\{ U(\mathfrak{g}), \overline{\overline{\phi}}_1, \ldots, \overline {\overline {\phi}}_N \}$ is an AssHDer pair of rank $N$. This construction is also functorial, hence, defines a functor $U : {\bf LieHDer} \rightarrow {\bf AssHDer}$ from the category of LieHDer pairs to the category of AssHDer pairs.

\begin{prop}
The functor $U : {\bf LieHDer} \rightarrow {\bf AssHDer}$ is left adjoint to the functor $(~)_c : {\bf AssHDer} \rightarrow {\bf LieHDer}$.
\end{prop}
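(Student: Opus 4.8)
The plan is to lift the classical adjunction between the universal enveloping algebra functor and the commutator functor to the present setting. Recall that for a Lie algebra $\mathfrak{g}$ and an associative algebra $A$, precomposition with the canonical map $\iota_{\mathfrak{g}} : \mathfrak{g} \rightarrow U(\mathfrak{g})$ yields a bijection
\[
\Hom_{{\bf Ass}}(U(\mathfrak{g}), A) \;\xrightarrow{\;\cong\;}\; \Hom_{{\bf Lie}}(\mathfrak{g}, A_c), \qquad F \longmapsto F \circ \iota_{\mathfrak{g}},
\]
natural in $\mathfrak{g}$ and $A$. I would show that, once $\mathfrak{g}$ is equipped with a higher derivation $\{ \phi_1, \ldots, \phi_N \}$ and $A$ with a higher derivation $\{ d_1, \ldots, d_N \}$, this bijection restricts to one between $\Hom_{{\bf AssHDer}}\big((U(\mathfrak{g}), \overline{\overline{\phi}}_1, \ldots, \overline{\overline{\phi}}_N), (A, d_1, \ldots, d_N)\big)$ and $\Hom_{{\bf LieHDer}}\big((\mathfrak{g}, \phi_1, \ldots, \phi_N), (A_c, d_1, \ldots, d_N)\big)$. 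The proposition follows at once, since naturality of the restricted bijection in both variables is inherited from the classical one.

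First I would record that $\overline{\overline{\phi}}_k \circ \iota_{\mathfrak{g}} = \iota_{\mathfrak{g}} \circ \phi_k$ for every $k$; this is immediate from the construction of $\overline{\overline{\phi}}_k$, because on degree-one tensors the map $\overline{\phi}_k$ of Proposition \ref{free-asshd} reduces to $\phi_k$, and this descends to the quotient $U(\mathfrak{g})$. Consequently, if $F : U(\mathfrak{g}) \rightarrow A$ is a morphism of AssHDer pairs, then $f := F \circ \iota_{\mathfrak{g}}$ satisfies
\[
f \circ \phi_k = F \circ \iota_{\mathfrak{g}} \circ \phi_k = F \circ \overline{\overline{\phi}}_k \circ \iota_{\mathfrak{g}} = d_k \circ F \circ \iota_{\mathfrak{g}} = d_k \circ f ,
\]
so $f$ is a morphism of LieHDer pairs. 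This gives one inclusion.

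For the converse — the main point — let $f : \mathfrak{g} \rightarrow A_c$ be a morphism of LieHDer pairs and let $F : U(\mathfrak{g}) \rightarrow A$ be the unique algebra morphism with $F \circ \iota_{\mathfrak{g}} = f$ supplied by the classical adjunction. I must verify $F \circ \overline{\overline{\phi}}_k = d_k \circ F$ for all $k$. Both sides being linear, and $U(\mathfrak{g})$ being spanned by products of elements of $\iota_{\mathfrak{g}}(\mathfrak{g})$ (including the empty product, on which both sides vanish by the higher derivation axioms), it suffices to prove $F(\overline{\overline{\phi}}_k(w)) = d_k(F(w))$ for every such product $w$, by induction on the number of factors. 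For a single generator $w = \iota_{\mathfrak{g}}(\xi)$ this is exactly the LieHDer condition on $f$ combined with $\overline{\overline{\phi}}_k \circ \iota_{\mathfrak{g}} = \iota_{\mathfrak{g}} \circ \phi_k$. For the inductive step write $w = x w'$ with $x$ a single generator and $w'$ a shorter product; using in turn that $\{ \overline{\overline{\phi}}_k \}$ is a higher derivation on $U(\mathfrak{g})$, that $F$ is an algebra morphism, the inductive hypothesis, and that $\{ d_k \}$ is a higher derivation on $A$, one computes
\begin{align*}
F\big(\overline{\overline{\phi}}_k(x w')\big) &= \sum_{i+j=k} F\big(\overline{\overline{\phi}}_i(x)\big)\, F\big(\overline{\overline{\phi}}_j(w')\big) = \sum_{i+j=k} d_i\big(F(x)\big)\, d_j\big(F(w')\big) \\
&= d_k\big(F(x) F(w')\big) = d_k\big(F(x w')\big) .
\end{align*}
Hence $F$ is a morphism of AssHDer pairs, which is the other inclusion, so $F \mapsto F \circ \iota_{\mathfrak{g}}$ restricts to the desired bijection.

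The only genuine obstacle is this last induction, i.e. showing that the enveloping-algebra extension of a derivation-compatible Lie map remains compatible with the induced higher derivations; it works precisely because both $\{ \overline{\overline{\phi}}_k \}$ and $\{ d_k \}$ obey the same higher-Leibniz recursion while $F$ intertwines products. Everything else — the hom-set bijection and its naturality in $\mathfrak{g}$ and $A$ — is a formal transfer of the classical adjunction $U \dashv (-)_c$.
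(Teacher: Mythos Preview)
Your proof is correct and follows essentially the same approach as the paper: both establish the hom-set bijection by restricting along $\iota_{\mathfrak{g}}$ in one direction and extending via the universal property in the other. The only packaging difference is that the paper routes the converse through the free AssHDer pair $T(\mathfrak{g})$ (invoking Proposition~\ref{free-asshd}) before descending to $U(\mathfrak{g})$, whereas you carry out the higher-derivation compatibility check directly on $U(\mathfrak{g})$ by induction on the length of products; your explicit induction is exactly what underlies the free property the paper cites.
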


\begin{proof}
We have to prove that there is an isomorphism
\begin{align*}
\mathrm{Hom}_{\bf AssHDer} ( U(\mathfrak{g}), A) \cong \mathrm{Hom}_{\bf LieHDer} ( \mathfrak{g}, A_c),
\end{align*}
for any AssHDer pair $(A, d_1, \ldots, d_N)$ and any LieHDer pair $(\mathfrak{g}, \phi_1, \ldots, \phi_N)$. Take any AssHDer pair morphism $f : U(\mathfrak{g}) \rightarrow A$. Its restriction to $\mathfrak{g}$ is a Lie algebra morphism $\mathfrak{g} \rightarrow A_c$ and commute with higher derivations. Hence it is a morphism of LieHDer pairs.

Conversely, for any LieHDer pair morphism $h : \mathfrak{g} \rightarrow A_c$, one can extend it to an AssHDer pair morphism $\widetilde{h} : T(\mathfrak{g}) \rightarrow A$. It induces a morphism $\widetilde{\widetilde{h}} : U (\mathfrak{g}) \rightarrow A$ of AssHDer pairs as $h$ is a LieHDer pair morphism. Finally, it is easy to see that the above two correspondences are inverses to each other. Hence the proof.
\end{proof}

\section{Representations and Cohomology}
In this section, we define representations and cohomology for an AssHDer pair. This generalizes the cohomology of AssDer pair as introduced in \cite{das-mandal}.

\begin{defn}
Let $(A, d_1, \ldots, d_N )$ be an AssHDer pair. A left module over it consists of a tuple $(M, d_1^M, \ldots, d_N^M )$ in which $M$ is a left $A$-module and $d_1^M, \ldots, d_N^M $ are linear maps on $M$ satisfying
\begin{align*}
d_k^M ( am ) = \sum_{i+j = k} d_i (a) d_j^M (m), ~~~~ \text{ for } k = 1, \ldots, N
\end{align*}
with convention that $d_0 = \mathrm{id}_A$ and $d_0^M = \mathrm{id}_M$. 
\end{defn}
Similarly, a right module over $(A, d_1, \ldots, d_N )$ consists of a tuple $(M, d_1^M, \ldots, d_N^M )$ in which $M$ is a right $A$-module and  $d_1^M, \ldots, d_N^M $ are linear maps on $M$ satisfying
\begin{align*}
d_k^M (ma) = \sum_{i+j = k } d_i^M (m) d_j (a), ~~~~ \text{ for } k = 1, \ldots, N.
\end{align*}
A bimodule (representation) over $(A, d_1, \ldots, d_N )$ is a tuple $(M, d_1^M, \ldots, d_N^M )$ which is both left module and right module and $M$ is additionally an $A$-bimodule, i.e. $(am)b = a (mb),$ for $a, b \in A$ and $m \in M$.

It has been proved in \cite{das-mandal} that given a representation of an AssDer pair gives rise to a semi-direct product. We extend this result in the context of higher derivations.

\begin{prop}\label{semi-prop}
Let $(A, d_1, \ldots, d_N)$ be an AssHDer pair of rank $N$ and $(M, d_1^M, \ldots, d_N^M )$ be a representation of it. Then $(A \oplus M, d_1 \oplus d_1^M, \ldots, d_N \oplus d_N^M)$ is an AssHDer pair where the associative algebra structure on $A \oplus M$ is given by the semi-direct product
\begin{align*}
(a , m ) (b, n) = (ab, an + m b).
\end{align*}
\end{prop}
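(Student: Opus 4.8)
The plan is to check the two defining conditions of an AssHDer pair for the tuple $(A \oplus M, d_1 \oplus d_1^M, \ldots, d_N \oplus d_N^M)$: (i) that the semi-direct product makes $A \oplus M$ an associative algebra, and (ii) that $\{ d_k \oplus d_k^M \}_{k=1}^N$ is a higher derivation of rank $N$ for this product.

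Step (i) is the classical trivial (square-zero) extension. Expanding $\big( (a,m)(b,n) \big)(c,p)$ and $(a,m)\big( (b,n)(c,p) \big)$ with the given formula, both equal $(abc,\ abp + anc + mbc)$; the equality of the first coordinates is associativity of $A$, while the equality of the second coordinates uses associativity of the left and right $A$-actions on $M$ together with the bimodule compatibility $(am)b = a(mb)$. (Note that $M \cdot M = 0$ in this product.) I would state this only briefly since it is standard.

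Step (ii) is the heart of the matter, and it is a direct computation. Writing $D_j := d_j \oplus d_j^M$ and fixing $k$, on one hand
\[
D_k\big( (a,m)(b,n) \big) = D_k(ab,\ an + mb) = \big( d_k(ab),\ d_k^M(an) + d_k^M(mb) \big),
\]
and applying (\ref{hder-id}) for $A$ to $d_k(ab)$, the left-module identity to $d_k^M(an)$, and the right-module identity to $d_k^M(mb)$, this becomes $\big( \sum_{i+j=k} d_i(a) d_j(b),\ \sum_{i+j=k} ( d_i(a) d_j^M(n) + d_i^M(m) d_j(b) ) \big)$. On the other hand,
\[
\sum_{i+j=k} D_i(a,m)\, D_j(b,n) = \sum_{i+j=k} \big( d_i(a) d_j(b),\ d_i(a) d_j^M(n) + d_i^M(m) d_j(b) \big),
\]
which matches the previous expression term by term. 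Hence the higher derivation identity holds for all $k = 1, \ldots, N$.

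Alternatively, one can argue conceptually via Remark \ref{higher-der-as-mor}: a higher derivation on $A \oplus M$ is the same as an algebra morphism $A \oplus M \to (A \oplus M)[[t]]/(t^{N+1})$, and the target is itself the semi-direct product of $A[[t]]/(t^{N+1})$ with the bimodule $M[[t]]/(t^{N+1})$; the left- and right-module identities say precisely that $m \mapsto \sum_k d_k^M(m) t^k$ is a bimodule map over the algebra morphism $a \mapsto \sum_k d_k(a) t^k$, and a pair consisting of an algebra morphism together with a compatible bimodule map always assembles into an algebra morphism of the semi-direct products. I do not anticipate any genuine obstacle here; the only point requiring care is to make sure all three defining identities are invoked — the AssHDer identity for $A$ and the left- and right-module identities for $M$ in step (ii), and the bimodule axiom in step (i).
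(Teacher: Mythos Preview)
Your proposal is correct and follows essentially the same approach as the paper: the paper cites the associativity of the semi-direct product as known and then performs exactly the direct computation in your Step~(ii), line by line. Your additional conceptual remark via Remark~\ref{higher-der-as-mor} is a pleasant extra viewpoint not present in the paper, but the main argument is the same.
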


\begin{proof}
It is known that $A \oplus M$ equipped with the above product is an associative algebra \cite{loday-book}. Moreover, for any $k \in \{ 1, \ldots, N \}$, we have
\begin{align*}
(d_k \oplus d_k^M) ( (a,m)(b,n)) =~& ( d_k (ab), d_k^M (an) + d_k^M (mb)) \\
=~& \sum_{i+j = k} ( d_i(a) d_j (b), d_i (a) d_j^M (n) + d_i^M (m) d_j (b) ) \\
=~&  \sum_{i+j = k} (d_i (a), d_i^M (m)) (d_j (b), d_j^M (n))\\
=~&  \sum_{i+j = k} (d_i \oplus d_i^M)(a, m) (d_j \oplus d_j^M)(b,n).
\end{align*}
Hence the proof.
\end{proof}


One may also define representations of LieHDer pairs. Let $(\mathfrak{g}, \phi_1, \ldots, \phi_N)$ be a LieHDer pair of rank $N$. A representation over it consists of a tuple $(M, \phi_1^M, \ldots, \phi_N^M)$ in which $M$ is a $\mathfrak{g}$-module and $\phi_1^M, \ldots, \phi_N^M$ are linear maps on $M$ satisfying
\begin{align*}
\phi_k^M [x, m] = \sum_{i+j = k} [ \phi_i (x), \phi_j^M(m)], ~~~ \text{ for } x \in \mathfrak{g}, m \in M \text{ and }  k =1, \ldots, N.
\end{align*}

\begin{prop}
Let $(\mathfrak{g}, \phi_1, \ldots, \phi_N)$ be a LieHDer pair. A representation over it is equivalent to a left module over the AssHDer pair $(U(\mathfrak{g}), \overline{\overline{\phi}}_1, \ldots, \overline {\overline {\phi}}_N).$
\end{prop}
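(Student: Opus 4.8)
The plan is to reduce everything to the classical dictionary between $\mathfrak{g}$-modules and left $U(\mathfrak{g})$-modules, and then match up the extra data carried by the higher derivations. Fix a vector space $M$. A representation of the LieHDer pair $(\mathfrak{g}, \phi_1, \ldots, \phi_N)$ is a $\mathfrak{g}$-module structure on $M$ together with linear maps $\phi_1^M, \ldots, \phi_N^M$ satisfying $\phi_k^M[x, m] = \sum_{i+j=k}[\phi_i(x), \phi_j^M(m)]$ (the bracket on the right denoting the module action), while a left module over the AssHDer pair $(U(\mathfrak{g}), \overline{\overline{\phi}}_1, \ldots, \overline{\overline{\phi}}_N)$ is a left $U(\mathfrak{g})$-module structure on $M$ together with linear maps $d_1^M, \ldots, d_N^M$ satisfying $d_k^M(um) = \sum_{i+j=k}\overline{\overline{\phi}}_i(u)\,d_j^M(m)$ for all $u \in U(\mathfrak{g})$. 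Since a $\mathfrak{g}$-module structure on $M$ is the same thing as a left $U(\mathfrak{g})$-module structure on $M$, and since $\overline{\overline{\phi}}_k$ restricts to $\phi_k$ on the canonical copy of $\mathfrak{g}$ inside $U(\mathfrak{g})$ (visible from the formula for $\overline{\vartheta}_k$ in Proposition~\ref{free-asshd} evaluated on words of length one, together with the injectivity of $\mathfrak{g} \hookrightarrow U(\mathfrak{g})$), it suffices to prove that, after identifying the two module structures and setting $d_k^M = \phi_k^M$, the LieHDer representation axiom is equivalent to the AssHDer module axiom.

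One direction is immediate: given a left module over the AssHDer pair, specialize $d_k^M(um) = \sum_{i+j=k}\overline{\overline{\phi}}_i(u)\,d_j^M(m)$ to $u = x \in \mathfrak{g}$; using $\overline{\overline{\phi}}_i(x) = \phi_i(x)$ and rewriting the $U(\mathfrak{g})$-action as the module action of $\mathfrak{g}$ recovers exactly $\phi_k^M[x,m] = \sum_{i+j=k}[\phi_i(x), \phi_j^M(m)]$, so $(M, d_1^M, \ldots, d_N^M)$ becomes a representation of the LieHDer pair.

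For the converse I would argue by induction on the length of monomials in $U(\mathfrak{g})$. The identity $\phi_k^M(um) = \sum_{i+j=k}\overline{\overline{\phi}}_i(u)\,\phi_j^M(m)$ is linear in $u$, so it is enough to check it for $u = 1$ and for $u = x_1\cdots x_r$ with $x_1, \ldots, x_r \in \mathfrak{g}$. For $u = 1$ one first notes $\overline{\overline{\phi}}_k(1) = 0$ for $k \geq 1$ (apply the higher-derivation identity to $1 = 1\cdot 1$ and induct on $k$), so both sides collapse to $\phi_k^M(m)$. For the inductive step write $u = x u'$ with $u'$ a shorter monomial, use associativity of the action to get $(xu')m = x(u'm)$, apply the length-one case (the LieHDer axiom) to $\phi_k^M(x(u'm)) = \sum_{i+j=k}\phi_i(x)\,\phi_j^M(u'm)$, expand $\phi_j^M(u'm)$ by the induction hypothesis, and then collapse the resulting triple convolution $\sum_{i+p+q=k}\phi_i(x)\,\overline{\overline{\phi}}_p(u')\,\phi_q^M(m)$ using associativity of the $U(\mathfrak{g})$-action together with the higher-derivation identity $\sum_{i+p=s}\overline{\overline{\phi}}_i(x)\,\overline{\overline{\phi}}_p(u') = \overline{\overline{\phi}}_s(xu')$ and $\overline{\overline{\phi}}_i(x) = \phi_i(x)$. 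This yields $\phi_k^M(um) = \sum_{s+q=k}\overline{\overline{\phi}}_s(u)\,\phi_q^M(m)$, as needed.

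Finally, the two constructions are manifestly inverse to one another: in each direction the underlying module structure is untouched and the maps $\phi_k^M$ and $d_k^M$ are simply identified, and the same reasoning applied to intertwiners shows the correspondence is compatible with morphisms, giving an equivalence of the relevant categories. The only genuinely delicate point is the bookkeeping in the inductive step — correctly nesting the sums $\sum_{i+j=k}$ and $\sum_{p+q=j}$ and recognizing the inner sum as a value of $\overline{\overline{\phi}}$ — but this is a routine manipulation of convolution identities; the rest is a direct transcription of the classical Lie/enveloping-algebra correspondence.
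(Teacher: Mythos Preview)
Your proposal is correct and follows the same route as the paper: both reduce to the classical equivalence between $\mathfrak{g}$-modules and left $U(\mathfrak{g})$-modules and then compare the higher-derivation compatibility conditions on $\mathfrak{g}\subset U(\mathfrak{g})$. The paper's proof stops at the assertion that the condition $\phi_k^M(xm) = \sum_{i+j=k}\overline{\overline{\phi}}_i(x)\,\phi_j^M(m)$ for $x\in U(\mathfrak{g})$ is equivalent to the LieHDer condition for $x\in\mathfrak{g}$, whereas you actually supply the induction on monomial length (and the check at $u=1$) that justifies the nontrivial direction of this equivalence; so your argument is strictly more complete, but not different in spirit.
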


\begin{proof}
It is known that a $\mathfrak{g}$-module is equivalent to a left $U(\mathfrak{g})$-module. More precisely, let $M$ be a left $U(\mathfrak{g})$-module, then the $\mathfrak{g}$-module structure on $M$ is given by $[x, m] := xm$, for $x \in \mathfrak{g}$ and $m \in M$.

Let $(M, \phi_1^M, \ldots, \phi_N^M)$ be a left module over the AssHDer pair $(U(\mathfrak{g}), \overline{\overline{\phi}}_1, \ldots, \overline {\overline {\phi}}_N)$. Then the conditions
\begin{align*}
\phi_k^M (xm ) = \sum_{i+j = k } \overline{\overline{\phi}}_i (x) \phi_j^M (m), ~~~~ \text{ for } k =1, \ldots, N
\end{align*}
are equivalent to $\phi_k^M [x, m] = \sum_{i+j = k} [\phi_i (x), \phi_j^M (m)],$ for $k =1, \ldots, N$. Hence the result follows.
\end{proof}

In the next, we introduce a cohomology for an AssHDer pair with coefficients in a representation. Before that, we introduce some notations.

Let $(A, d_1, \ldots, d_N)$ be an AssHDer pair of rank $N$ and $(M, d_1^M, \ldots, d_N^M)$ be a representation of it. For any $n \geq 1$, we define some new maps $\delta_k : \mathrm{Hom}(A^{\otimes n}, M ) \rightarrow \mathrm{Hom}(A^{\otimes n}, M )$, for $k = 1, \ldots, N$, by
\begin{align*}
\delta_k f = \sum_{i_1 + \cdots + i_n = k} f \circ ( d_{i_1} \otimes \cdots \otimes d_{i_n} ) - d_k^M \circ f.
\end{align*}
Moreover, for any $N$-tuple $(f_1, \ldots, f_N)$ of maps in $\mathrm{Hom}(A^{\otimes n}, M)$, we define $\delta_{\mathrm{Hoch}}' (f_k ) \in \mathrm{Hom}(A^{\otimes n+1}, M)$, for $k =1, \ldots, N$, by
\begin{align*}
\delta_{\mathrm{Hoch}}' (f_k ) (a_1, \ldots, a_{n+1}) =~& \sum_{i+j = k} d_i (a_1) f_j (a_2, \ldots, a_{n+1}) + \sum_{i=1}^n (-1)^i~ f_k (a_1, \ldots, a_i a_{i+1}, \ldots, a_{n+1} ) \\
~&+ (-1)^{n+1} \sum_{i+j = k} f_i (a_1, \ldots, a_n ) d_j (a_{n+1}), ~~ \quad \text{ for } k =1, \ldots, N.
\end{align*}
The operator $\delta_{\mathrm{Hoch}}'$ is a modification of the classical Hochschild coboundary operator $\delta_{\mathrm{Hoch}}$.

\begin{remark}
When $(M, d_1^M, \ldots, d_N^M) = (A, d_1, \ldots, d_N)$ there is a bracket generalizing the Gerstenhaber bracket \cite{gers} as follows. For any $P \in \mathrm{Hom}( A^{\otimes m }, A)$ and a tuple $(f_1, \ldots, f_N)$ of maps in $\mathrm{Hom} (A^{\otimes n}, A)$, we define $[P, f_k]_N \in \mathrm{Hom} ( A^{\otimes m+n-1}, A),$ for $k=1, \ldots, N$, by
\begin{align*}
&[P, f_k]_N (a_1, \ldots, a_{m+n-1}) \\&:= \sum_{j=1}^m \sum_{i_1 + \cdots + i_m = k} (-1)^{(j-1)(n-1)} ~P \big( d_{i_1} (a_1), \ldots, f_{i_j} (a_j, \ldots, a_{j+n-1}), \ldots, d_{i_m} (a_{m+n-1}) \big) \\
& \quad - (-1)^{(m-1)(n-1)} \sum_{j=1}^n (-1)^{(j-1)(m-1)} f_k \big( a_1, \ldots, a_{j-1}, P (a_j, \ldots, a_{j+m-1}), \ldots, a_{m+n-1} \big).
\end{align*}
Define $[f_k, P]_N := - (-1)^{(m-1)(n-1)} [P, f_k]$. This bracket satisfy similar properties as of Gerstenhaber bracket. With this set-up, we have
\begin{align*}
\delta_k f = - [ d_k, f]_N  \quad \text{ and } \quad \delta_{\mathrm{Hoch}}' (f_k) = (-1)^{n-1} [\mu, f_k ]_N.
\end{align*}
\end{remark}

The following two results are useful to define the coboundary operator of the cohomology of AssHDer pair.
\begin{prop}
The operator $\delta_{\mathrm{Hoch}}'$ satisfies $(\delta_{\mathrm{Hoch}}')^2 = 0$.
\end{prop}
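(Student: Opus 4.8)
The plan is to reduce the claim to the classical fact that the ordinary Hochschild coboundary operator squares to zero. First I would observe that $\delta_{\mathrm{Hoch}}'$ acts on $N$-tuples $(f_1,\dots,f_N)$ of cochains in $\mathrm{Hom}(A^{\otimes n},M)$, and that the formula for $\delta_{\mathrm{Hoch}}'(f_k)$ is built from three blocks: the left-action term $\sum_{i+j=k} d_i(a_1) f_j(a_2,\dots,a_{n+1})$, the inner face terms $\sum_{i=1}^{n}(-1)^i f_k(a_1,\dots,a_ia_{i+1},\dots,a_{n+1})$, and the right-action term $(-1)^{n+1}\sum_{i+j=k} f_i(a_1,\dots,a_n)\,d_j(a_{n+1})$. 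When $N=1$ and $d_1=0$ this is exactly the Hochschild differential $\delta_{\mathrm{Hoch}}$, so the general operator is a ``convolution-twisted'' version of it. The cleanest way to handle the bookkeeping is to package the tuple $(f_1,\dots,f_N)$ as a single cochain $F = f_0 + f_1 t + \cdots + f_N t^N$ with values in $M[[t]]/(t^{N+1})$, where $f_0$ is a fixed bookkeeping placeholder (one sets $f_0$ so that the $k=0$ component is the usual Hochschild term with $d_0=\mathrm{id}$), and to reinterpret $\delta_{\mathrm{Hoch}}'$ as the honest Hochschild coboundary for the algebra $A$ acting on the bimodule $M[[t]]/(t^{N+1})$, where the left and right actions of $A$ are deformed by the morphisms $a\mapsto \sum_i d_i(a)t^i$ and $m\mapsto \sum_i d_i^M(m)t^i$ coming from Remark \ref{higher-der-as-mor} and the module axioms.

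Concretely, I would verify that $M[[t]]/(t^{N+1})$ is a genuine $A$-bimodule via $a\cdot m := \big(\sum_i d_i(a)t^i\big)\big(\sum_j d_j^M(m_j)t^j\big)$ and similarly on the right — this uses precisely the left-module identity $d_k^M(am)=\sum_{i+j=k}d_i(a)d_j^M(m)$, its right analogue, and the bimodule compatibility $(am)b=a(mb)$, all of which are part of the definition of a representation. The associativity of $A$ and the fact that $a\mapsto\sum_i d_i(a)t^i$ is an algebra morphism (Remark \ref{higher-der-as-mor}) then guarantee that $M[[t]]/(t^{N+1})$ is a bimodule. Once this identification is in place, $\delta_{\mathrm{Hoch}}'$ is literally the Hochschild coboundary $\delta_{\mathrm{Hoch}}$ for $(A, M[[t]]/(t^{N+1}))$ evaluated in the appropriate $t$-degree, so $(\delta_{\mathrm{Hoch}}')^2=0$ follows from $(\delta_{\mathrm{Hoch}})^2=0$ by reading off the coefficient of $t^k$.

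If one prefers a hands-on argument avoiding the $t$-adic language, the alternative is a direct expansion of $(\delta_{\mathrm{Hoch}}')^2(f_k)$: apply $\delta_{\mathrm{Hoch}}'$ twice, collect the resulting terms into the standard five families (two involving successive multiplications $a_\ell a_{\ell+1}$ and $a_{\ell+1}a_{\ell+2}$ versus $a_\ell a_{\ell+1}a_{\ell+2}$, two cross terms between an inner face and an outer action term, and one term with both actions applied), and check that the signs $(-1)^i$ and $(-1)^{n+1}$ produce the usual cancellations; the only new feature is that every $d_i$ or $d_j^M$ that appears does so inside a convolution sum $\sum_{i+j=k}$ or $\sum_{i_1+\cdots=k}$, and associativity of the convolution product together with the module identities ensures these reindex correctly. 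The main obstacle is purely organizational: keeping the multi-indices straight when an action term meets an inner face term, so that, for instance, $d_i(a_1)f_j(\dots,a_\ell a_{\ell+1},\dots)$ cancels against the matching term coming from $\delta_{\mathrm{Hoch}}'$ applied in the other order. I would therefore present the proof via the bimodule-over-$M[[t]]/(t^{N+1})$ reinterpretation, which makes these cancellations automatic, and relegate the multi-index verification to a remark.
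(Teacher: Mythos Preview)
The paper states this proposition without proof, so there is no argument in the text to compare against. Your main strategy --- restricting the $A[[t]]/(t^{N+1})$-bimodule $M[[t]]/(t^{N+1})$ along the algebra morphism $a\mapsto\sum_i d_i(a)t^i$ of Remark~\ref{higher-der-as-mor} and recognising $\delta_{\mathrm{Hoch}}'$ as the ordinary Hochschild differential of $A$ with coefficients in this twisted $A$-bimodule --- is correct and is the cleanest route. Reading off the coefficient of $t^k$ in $(\delta_{\mathrm{Hoch}})^2 F=0$ for $F=\sum_j f_j t^j$ yields $(\delta_{\mathrm{Hoch}}')^2(f_k)=0$ exactly as you say. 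The paper's preceding remark, expressing $\delta_{\mathrm{Hoch}}'(f_k)=(-1)^{n-1}[\mu,f_k]_N$ via the generalised Gerstenhaber bracket, suggests an alternative proof through bracket identities, but nothing is written out, and your approach is at least as transparent.

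One small correction is needed in your concrete description. The twisted left action you wrote, $a\cdot m := \big(\sum_i d_i(a)t^i\big)\big(\sum_j d_j^M(m_j)t^j\big)$, is not the right formula: the $d_j^M$ should not appear. The action you actually want is
\[
a\cdot\Big(\sum_j m_j t^j\Big):=\Big(\sum_i d_i(a)t^i\Big)\Big(\sum_j m_j t^j\Big),
\]
using the original $A$-action on $M$ extended $\mathbb{K}[[t]]/(t^{N+1})$-linearly. Checking that this is an $A$-bimodule uses only that $a\mapsto\sum_i d_i(a)t^i$ is an algebra morphism (the higher-derivation identities on $A$) together with the fact that $M$ is an $A$-bimodule; the representation identities $d_k^M(am)=\sum_{i+j=k}d_i(a)d_j^M(m)$ are \emph{not} needed at this step. (Those identities say instead that $m\mapsto\sum_j d_j^M(m)t^j$ is a module homomorphism into the twisted bimodule, which is a separate and here irrelevant fact.) With this corrected action your argument goes through verbatim; you may also take $f_0=0$ without loss, since only the coefficients $t^1,\dots,t^N$ are read off.
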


\begin{lemma}
We have $\delta_{\mathrm{Hoch}} ' \circ \delta_k = \delta_k \circ \delta_{\mathrm{Hoch}}$, for $k=1, \ldots, N$.
\end{lemma}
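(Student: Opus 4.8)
The plan is to unravel both sides on an arbitrary cochain $f \in \mathrm{Hom}(A^{\otimes n}, M)$ evaluated on $(a_1, \ldots, a_{n+1})$ and compare term by term. Recall that $\delta_k f = \sum_{i_1 + \cdots + i_n = k} f \circ (d_{i_1} \otimes \cdots \otimes d_{i_n}) - d_k^M \circ f$, while $\delta_{\mathrm{Hoch}}$ is the usual Hochschild differential (the $k=0$ part of $\delta'_{\mathrm{Hoch}}$, i.e. the three groups of terms with $d_0 = \mathrm{id}$ in the outer slots) and $\delta'_{\mathrm{Hoch}}$ is its $d$-twisted analogue. So on the left we apply $\delta'_{\mathrm{Hoch}}$ to the single cochain $g := \delta_k f$, and on the right we apply $\delta_k$ (the sum over compositions with $d_{i_1} \otimes \cdots \otimes d_{i_{n+1}}$ minus $d_k^M \circ -$) to the single cochain $\delta_{\mathrm{Hoch}} f$.

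First I would expand $\big(\delta'_{\mathrm{Hoch}}(\delta_k f)\big)(a_1,\ldots,a_{n+1})$. This produces three blocks: a "left" block $\sum_{i+j=k} d_i(a_1)\,(\delta_k f)_{\text{-with-}j}(a_2,\ldots,a_{n+1})$ — but note $\delta_k f$ is a single cochain, not a tuple, so the first and last blocks of $\delta'_{\mathrm{Hoch}}$ read $\sum_{i+j=k} d_i(a_1)\,(\delta_j f)(a_2,\ldots,a_{n+1})$ wait — I must be careful: the definition of $\delta'_{\mathrm{Hoch}}$ takes a tuple $(f_1,\ldots,f_N)$; here the relevant convention is that $\delta'_{\mathrm{Hoch}}$ applied in the composite means we feed the constant tuple or, more precisely, the statement is to be read with $\delta'_{\mathrm{Hoch}}$ acting slot-wise so that $\delta'_{\mathrm{Hoch}} \circ \delta_k$ sends $f$ to the cochain whose $(a_1,\ldots,a_{n+1})$-value uses $(\delta_k f)$ in the middle Hochschild-type terms and $\sum_{p+q} d_p \otimes (\delta_q f)$-type terms at the ends. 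I would therefore substitute the definition of $\delta_k f$ — namely $\sum f\circ(d_{i_1}\otimes\cdots\otimes d_{i_n}) - d_k^M\circ f$ — into each occurrence, and collect: (i) all terms where some $d_{i_r}$ hits an interior argument $a_{r+1}$, (ii) terms where the edge maps $d_i(a_1)$ or $d_j(a_{n+1})$ combine with interior $d$'s, and (iii) the $d_k^M$ terms. Symmetrically, I would expand $\big(\delta_k(\delta_{\mathrm{Hoch}} f)\big)(a_1,\ldots,a_{n+1}) = \sum_{i_1+\cdots+i_{n+1}=k}(\delta_{\mathrm{Hoch}} f)(d_{i_1}a_1,\ldots,d_{i_{n+1}}a_{n+1}) - d_k^M\big((\delta_{\mathrm{Hoch}} f)(a_1,\ldots,a_{n+1})\big)$, then push the outer $d_{i_r}$'s through the Hochschild terms, using the higher-derivation identity $d_{i_r}(a_r a_{r+1}) = \sum_{p+q=i_r} d_p(a_r)\,d_q(a_{r+1})$ on the contracted slot and the module compatibility $d_k^M(a m) = \sum_{i+j=k} d_i(a)\,d_j^M(m)$, $d_k^M(m a) = \sum d_i^M(m) d_j(a)$ on the two boundary terms.

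The matching is then a bookkeeping exercise in reindexing the triple sums over $\{i_1+\cdots+i_n=k\}$ or $\{i_1+\cdots+i_{n+1}=k\}$: every monomial $d_{p_1}(a_1)\cdots$ appearing on one side must reappear with the same sign on the other. The signs $(-1)^i$ attached to the $i$-th face map are identical on both sides because $\delta_k$ does not touch them, so the only real content is that (a) the higher-Leibniz rule $d_{i}(a_r a_{r+1}) = \sum_{p+q=i} d_p(a_r)d_q(a_{r+1})$ exactly converts the "$\delta_k$ then contract" edge terms into the "$\delta'_{\mathrm{Hoch}}$ then distribute $d$'s" edge terms, and (b) the bimodule identities do the same for the two outermost ($d_i(a_1) f(\ldots)$ and $f(\ldots)d_j(a_{n+1})$) terms, while (c) the $-d_k^M\circ(-)$ pieces cancel against each other after one application of the module compatibility to $d_k^M$ composed with the outer multiplication in $\delta_{\mathrm{Hoch}} f$. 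I expect the main obstacle to be purely combinatorial: keeping the index sets straight when the "$k$-weight" $k$ is distributed simultaneously over $n$ (or $n+1$) tensor slots and the contraction $a_r a_{r+1} \mapsto d_{i_r}(a_r a_{r+1})$ splits one slot's weight into two — i.e. verifying that $\sum_{i_1+\cdots+i_{n+1}=k}$ restricted to "contract slots $r,r+1$" biject­s with $\sum_{i_1+\cdots+i_n=k}$ after setting the $r$-th weight to $i_r + i_{r+1}$. Once that reindexing is pinned down, everything collapses. No subtlety is expected from the characteristic-zero hypothesis here; it is a formal identity, and one could alternatively deduce it from the bracket formulation in the Remark (writing $\delta_k = -[d_k,-]_N$, $\delta_{\mathrm{Hoch}} = (-1)^{n-1}[\mu,-]$ with $\mu$ the multiplication, and using the graded Jacobi identity together with $[d_k,\mu]_N = 0$, which encodes precisely that $d_k$ is a higher derivation of $\mu$), but I would carry out the direct term-by-term check since the bracket's Jacobi identity has only been asserted, not proved, in the excerpt.
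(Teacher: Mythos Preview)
The paper states this lemma without proof, so there is no argument in the paper to compare against directly. Your plan is sound. Once you settle (as you eventually do) on the correct reading of $\delta'_{\mathrm{Hoch}}\circ\delta_k$---namely, the $k$-th component of $\delta'_{\mathrm{Hoch}}$ applied to the tuple $(\delta_1 f,\ldots,\delta_N f)$, so that the boundary terms involve $\sum_{i+j=k} d_i(a_1)\,(\delta_j f)(a_2,\ldots,a_{n+1})$ and its mirror---the term-by-term matching you describe goes through exactly via the higher Leibniz rule on the contracted slot and the bimodule compatibilities on the two outermost terms; your observations (a), (b), (c) and the reindexing bijection between compositions of $k$ into $n$ versus $n+1$ parts are precisely what is needed.

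The bracket-based shortcut you mention at the end (write $\delta_k f=-[d_k,f]_N$, $\delta'_{\mathrm{Hoch}} f_k=(-1)^{n-1}[\mu,f_k]_N$, then use a graded Jacobi identity together with $[d_k,\mu]_N=0$) is in fact the style the paper prefers elsewhere---the proof of the obstruction-cocycle proposition in the deformation section is carried out entirely in this bracket calculus. But you are right that the Jacobi-type identity for $[\,\cdot\,,\,\cdot\,]_N$ is only asserted, not proved, in the paper, so your direct expansion is the more self-contained choice.
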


We are now in a position to define the cohomology of the AssHDer pair. Define the space $C^0_{\mathrm{AssHDer}} (A, M)$ of $0$-cochains to be $0$ and the space $C^1_{\mathrm{AssHDer}} (A, M)$ of $1$-cochains to be $\mathrm{Hom}(A, M)$. The space of $n$-cochains $C^n_{\mathrm{AssHDer}}(A, M)$, for $n \geq 2$, to be defined by
\begin{align*}
C^n_{\mathrm{AssHDer}}(A, M) := \mathrm{Hom}( A^{\otimes n}, M) \times \underbrace{\mathrm{Hom}( A^{\otimes n-1}, M) \times \cdots \times \mathrm{Hom}( A^{\otimes n-1}, M)}_{N ~\text{times}}.
\end{align*}
We define a map $\partial : C^n_{\mathrm{AssHDer}}(A, M) \rightarrow C^{n+1}_{\mathrm{AssHDer}}(A, M)$ by
\begin{align*}
\partial f = ( \delta_{\mathrm{Hoch}} f ; - \delta_1 f, -\delta_2 f, \ldots, - \delta_N f),~~~ \text{ for } f \in C^1_{\mathrm{AssHDer}}(A, M),
\end{align*}
\begin{align*}
\partial ( f; f_1, \ldots, f_N ) = ( \delta_{\mathrm{Hoch}} f ;~ \delta_{\mathrm{Hoch}}' f_1 + (-1)^n \delta_1 f,~ \delta_{\mathrm{Hoch}}' f_2 + (-1)^n \delta_2 f, \ldots,  \delta_{\mathrm{Hoch}}' f_N + (-1)^n \delta_N f),
\end{align*}
for $( f; f_1, \ldots, f_N ) \in C^n_{\mathrm{AssHDer}}(A, M), n \geq 2$.

\begin{prop}
The map $\partial$ satisfies $\partial^2 = 0$.
\end{prop}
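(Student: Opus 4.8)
The plan is to deduce $\partial^2 = 0$ from the three facts already at our disposal: the classical Hochschild identity $\delta_{\mathrm{Hoch}}^2 = 0$, the identity $(\delta_{\mathrm{Hoch}}')^2 = 0$ from the previous Proposition, and the commutation relation $\delta_{\mathrm{Hoch}}' \circ \delta_k = \delta_k \circ \delta_{\mathrm{Hoch}}$ from the previous Lemma. Since $\partial$ is given by two different formulas on $C^1_{\mathrm{AssHDer}}(A,M)$ and on $C^n_{\mathrm{AssHDer}}(A,M)$ for $n \geq 2$, I would split the verification into these two cases and in each case compare the two sides slot by slot.

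First I would treat a $1$-cochain $f$. Then $\partial f = (\delta_{\mathrm{Hoch}} f; -\delta_1 f, \ldots, -\delta_N f)$ lies in $C^2_{\mathrm{AssHDer}}(A,M)$, so applying $\partial$ again uses the second formula with $n = 2$. The first slot of $\partial^2 f$ is $\delta_{\mathrm{Hoch}}(\delta_{\mathrm{Hoch}} f) = 0$, and the $k$-th slot is
\[
\delta_{\mathrm{Hoch}}'(-\delta_k f) + (-1)^2\, \delta_k(\delta_{\mathrm{Hoch}} f) = -\,\delta_{\mathrm{Hoch}}'\delta_k f + \delta_k \delta_{\mathrm{Hoch}} f,
\]
which vanishes by the Lemma.

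Next I would treat $(f; f_1, \ldots, f_N) \in C^n_{\mathrm{AssHDer}}(A,M)$ with $n \geq 2$. Here $\partial(f; f_1, \ldots, f_N) = (\delta_{\mathrm{Hoch}} f;\ \delta_{\mathrm{Hoch}}' f_1 + (-1)^n \delta_1 f,\ \ldots,\ \delta_{\mathrm{Hoch}}' f_N + (-1)^n \delta_N f)$ lies in $C^{n+1}_{\mathrm{AssHDer}}(A,M)$, so the second application of $\partial$ uses the same formula but with $n+1$ in place of $n$. The first slot again gives $\delta_{\mathrm{Hoch}}^2 f = 0$; for the $k$-th slot one obtains
\[
\delta_{\mathrm{Hoch}}'\big(\delta_{\mathrm{Hoch}}' f_k + (-1)^n \delta_k f\big) + (-1)^{n+1}\delta_k(\delta_{\mathrm{Hoch}} f) = (\delta_{\mathrm{Hoch}}')^2 f_k + (-1)^n\big(\delta_{\mathrm{Hoch}}'\delta_k f - \delta_k\delta_{\mathrm{Hoch}} f\big),
\]
and this is zero by $(\delta_{\mathrm{Hoch}}')^2 = 0$ together with the Lemma. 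Hence $\partial^2 = 0$ in every degree.

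The computation itself carries essentially no difficulty once the preparatory results are in hand; the only point requiring care is the bookkeeping of signs, specifically that the prefactor in front of $\delta_k f$ flips from $(-1)^n$ to $(-1)^{n+1}$ on the second application of $\partial$, which is precisely what makes the two mixed terms $\delta_{\mathrm{Hoch}}'\delta_k f$ and $\delta_k\delta_{\mathrm{Hoch}} f$ cancel after invoking the Lemma. The genuine work lies in the previous Proposition and Lemma rather than here.
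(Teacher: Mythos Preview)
Your proof is correct and follows essentially the same argument as the paper's own proof: both split into the cases $n=1$ and $n\geq 2$, apply $\partial$ twice, and in each slot reduce to $\delta_{\mathrm{Hoch}}^2=0$, $(\delta_{\mathrm{Hoch}}')^2=0$, and the commutation relation $\delta_{\mathrm{Hoch}}'\circ\delta_k=\delta_k\circ\delta_{\mathrm{Hoch}}$. Your remark that the sign flip from $(-1)^n$ to $(-1)^{n+1}$ is what forces the mixed terms to cancel is a nice piece of exposition that the paper leaves implicit.
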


\begin{proof}
For any $f \in C^1_{\mathrm{AssHDer}} (A, M)$, we have
\begin{align*}
\partial^2 f =~& \partial ( \delta_{\mathrm{Hoch}} f;~  - \delta_1 f, \ldots, - \delta_N f) \\
=~& ( (\delta_{\mathrm{Hoch}})^2 f;~ - \delta_{\mathrm{Hoch}}' \delta_1 f + \delta_1 \delta_{\mathrm{Hoch}} f, \ldots, - \delta_{\mathrm{Hoch}}' \delta_N f + \delta_N \delta_{\mathrm{Hoch}} f ) = 0.
\end{align*}
Similarly, for any $(f; f_1, \ldots, f_N ) \in C^n_{\mathrm{AssHDer}} (A, M)$ with $n \geq 2$, we have
\begin{align*}
~&\partial^2 (f; f_1, \ldots, f_N )\\ =~& \partial ( \delta_{\mathrm{Hoch}} f;~ \delta_{\mathrm{Hoch}}' f_1 + (-1)^n \delta_1 f, \ldots, \delta_{\mathrm{Hoch}}' f_N + (-1)^n \delta_N f) \\
=~& \big( (\delta_{\mathrm{Hoch}})^2 f;~ (\delta_{\mathrm{Hoch}}')^2 f_1 + (-1)^n \delta_{\mathrm{Hoch}}' \delta_1 f +(-1)^{n+1} \delta_1 \delta_{\mathrm{Hoch}} f, \ldots,\\& \qquad \qquad \qquad (\delta_{\mathrm{Hoch}}')^2 f_N + (-1)^n \delta_{\mathrm{Hoch}}' \delta_N f +(-1)^{n+1} \delta_N \delta_{\mathrm{Hoch}} f \big) \\
=~& 0.
\end{align*}
Hence the proof.
\end{proof}

Therefore, $\{ C^\ast_{\mathrm{AssHDer}}(A, M), \partial \}$ forms a cochain complex.
We denote the corresponding cohomology groups by $H^\ast_{\mathrm{AssHDer}} (A, M)$.

\section{Central extensions}
In this section, we study extensions of an AssHDer pair by a trivial AssHDer pair. The main result of this section states that isomorphism classes of such extensions are classified by the second cohomology of the AssHDer pair with coefficients in the trivial representation.

Let $(A, d_1, \ldots, d_N)$ be an AssHDer pair and $(M, d_1^M, \ldots, d_N^M)$ be a trivial AssHDer pair (i.e. the associative product of $M$ is trivial).

\begin{defn}
A central extension of $(A, d_1, \ldots, d_N)$ by $(M, d_1^M, \ldots, d_N^M)$ is an exact sequence of AssHDer pairs
\begin{align}\label{eqn-t}
\xymatrix{
0 \ar[r]&  (M, d_1^M, \ldots, d_N^M) \ar[r]^{i} & (\widehat{A}, \widehat{d}_1, \ldots, \widehat{d}_N) \ar[r]^{p} & (A, d_1, \ldots, d_N) \ar[r] & 0
}
\end{align}
satisfying $i(m) \cdot \widehat{a} = 0 = \widehat{a} \cdot i(m)$, for all $m \in M$ and $\widehat{a} \in \widehat{ A}.$
\end{defn}

In a central extension, we can identify $M$ with the corresponding subalgebra of $\widehat{ A}$ and with this identification $d_k^M = \widehat{d}_k |_M,$ for all $k =1, \ldots, N$.

\begin{defn}
Two central extensions $(\widehat{A}, \widehat{d}_1, \ldots, \widehat{d}_N)$ and $(\widehat{A}', \widehat{d}_1', \ldots, \widehat{d}_N')$ are said to be isomorphic if there is an isomorphism $\eta : (\widehat{A}, \widehat{d}_1, \ldots, \widehat{d}_N) \rightarrow (\widehat{A}', \widehat{d}_1', \ldots, \widehat{d}_N') $ of AssHDer pairs that makes the following diagram commutative
\[
\xymatrix{
0 \ar[r] & (M, d_1^M, \ldots, d_{N}^M) \ar@{=}[d] \ar[r]^i & (\widehat{A}, \widehat{d}_1, \ldots, \widehat{d}_N) \ar[r]^p \ar[d]_\eta & (A, d_1, \ldots, d_N) \ar@{=}[d] \ar[r] & 0 \\
0 \ar[r] & (M, d_1^M, \ldots, d_{N}^M) \ar[r]_{i'} & (\widehat{A}', \widehat{d}_1', \ldots, \widehat{d}_N') \ar[r]_{p'} & (A, d_1, \ldots, d_N) \ar[r] & 0. \\
}
\]
\end{defn}

Let (\ref{eqn-t}) be a central extension. A section of the map $p$ is given by a linear map $s : A \rightarrow \widehat{A}$ such that $p \circ s = \mathrm{id}_A$.

For any section $s$, we define a map $\psi : A^{\otimes 2} \rightarrow M$ and maps $\chi_k : A \rightarrow M $ (for $k = 1, \ldots, N$) by
\begin{align*}
\psi (a, b) = s(a) \cdot s(b) - s(ab) ~~~ \text{ and } ~~~ \chi_k (a) = \widehat{d}_k ( s(a)) - s ( d_k (a)).
\end{align*}
Note that the vector space $\widehat{A}$ is isomorphic to the direct sum $A \oplus M$ via the section $s$. Therefore, we may transfer the structures (product and linear maps) of $\widehat{A}$ to $A \oplus M$. The product and linear maps on $A \oplus M$ are given by
\begin{align*}
(a \oplus m) \cdot (b \oplus n) =~& ab \oplus \psi (a, b),\\
d_k^{A \oplus M} (a \oplus m) =~& d_k (a) \oplus d_k^M (m) + \chi_k (a), ~~~ \text{ for } k = 1, \ldots, N.
\end{align*}

\begin{prop}\label{prop-3}
The vector space $A \oplus M$ equipped with the above product and linear maps $d_k^{A \oplus M}$ (for $k =1 , \ldots, N$) forms an AssHDer pair if and only if $(\psi; \chi_1, \ldots, \chi_N)$ is a $2$-cocycle in the cohomology of the AssHDer pair $A$ with coefficients in the trivial representation $M$. Moreover, the cohomology class of $(\psi; \chi_1, \ldots, \chi_N)$ does not depend on the choice of the section $s$.
\end{prop}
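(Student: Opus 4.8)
The plan is to unwind the two defining conditions of an AssHDer pair structure on the vector space $A \oplus M$ — associativity of the given product and the higher derivation identities for the maps $d_k^{A \oplus M}$ — into explicit equations for the pair $(\psi; \chi_1, \ldots, \chi_N)$, and then to observe that these equations are exactly the statement that $\partial(\psi; \chi_1, \ldots, \chi_N) = 0$ in $C^3_{\mathrm{AssHDer}}(A, M)$. Since the equivalence is an ``if and only if'', both directions of the first assertion come out of the same chain of identities.

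I would begin by recording two consequences of the hypothesis that (\ref{eqn-t}) is a \emph{central} extension: under the vector space identification $\widehat{A} \cong A \oplus M$ induced by the section $s$, the resulting $A$-bimodule structure on $M$ is trivial, $a \cdot m = 0 = m \cdot a$ for $a \in A$, $m \in M$ (this is exactly why $M$ enters the statement as the trivial representation), and $\widehat{d}_k|_M = d_k^M$. Transporting the product of $\widehat{A}$ and the maps $\widehat{d}_k$ along $s$ then reproduces precisely the formulas $(a \oplus m)(b \oplus n) = ab \oplus \psi(a,b)$ and $d_k^{A \oplus M}(a \oplus m) = d_k(a) \oplus \big(d_k^M(m) + \chi_k(a)\big)$ stated before the Proposition (with the conventions $d_0 = \mathrm{id}_A$, $d_0^M = \mathrm{id}_M$, and $\chi_0 = 0$).

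For associativity, comparing $\big((a \oplus m)(b \oplus n)\big)(c \oplus p)$ with $(a \oplus m)\big((b \oplus n)(c \oplus p)\big)$: the $A$-components agree by associativity of $A$, while the $M$-components give $\psi(ab, c) = \psi(a, bc)$, which — the $A$-action on $M$ being trivial — is precisely $(\delta_{\mathrm{Hoch}} \psi)(a, b, c) = 0$. For the higher derivation identities, expanding $d_k^{A \oplus M}\big((a \oplus m)(b \oplus n)\big)$ and $\sum_{i+j=k} d_i^{A \oplus M}(a \oplus m)\, d_j^{A \oplus M}(b \oplus n)$: the $A$-components agree because $\{d_1, \ldots, d_N\}$ is a higher derivation on $A$, and (using triviality of the action to discard the $M$-valued cross products) the $M$-components yield
\[
\chi_k(ab) + d_k^M\big(\psi(a,b)\big) = \sum_{i+j=k} \psi\big(d_i(a), d_j(b)\big), \qquad k = 1, \ldots, N.
\]
Invoking triviality of the action again, $(\delta_{\mathrm{Hoch}}' \chi_k)(a, b) = -\chi_k(ab)$ and $(\delta_k \psi)(a,b) = \sum_{i+j=k} \psi(d_i(a), d_j(b)) - d_k^M(\psi(a,b))$, so the displayed identity is exactly $\delta_{\mathrm{Hoch}}' \chi_k + \delta_k \psi = 0$; and since $(-1)^n = 1$ for $n = 2$, these, together with $\delta_{\mathrm{Hoch}} \psi = 0$, are precisely the components of $\partial(\psi; \chi_1, \ldots, \chi_N) = 0$. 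Hence $\big(A \oplus M, d_1^{A \oplus M}, \ldots, d_N^{A \oplus M}\big)$ is an AssHDer pair if and only if $(\psi; \chi_1, \ldots, \chi_N)$ is a $2$-cocycle.

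For the final assertion, given another section $s'$, the difference $\phi := s' - s$ takes values in $M \subseteq \widehat{A}$, so $\phi \in \mathrm{Hom}(A, M) = C^1_{\mathrm{AssHDer}}(A, M)$. Expanding $s'(a) s'(b)$ and using centrality of $M$ to drop the cross terms gives $\psi'(a, b) = \psi(a, b) - \phi(ab) = (\psi + \delta_{\mathrm{Hoch}} \phi)(a, b)$, while $\widehat{d}_k|_M = d_k^M$ gives $\chi_k'(a) = \chi_k(a) + d_k^M(\phi(a)) - \phi(d_k(a)) = (\chi_k - \delta_k \phi)(a)$; therefore $(\psi'; \chi_1', \ldots, \chi_N') = (\psi; \chi_1, \ldots, \chi_N) + \partial \phi$, so the two $2$-cocycles represent the same cohomology class. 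The one genuine point requiring care is bookkeeping: in the sums $\sum_{i+j=k}$ one must keep track of the boundary terms $i = 0$ and $j = 0$ (where the conventions $d_0 = \mathrm{id}$, $d_0^M = \mathrm{id}$, $\chi_0 = 0$ intervene), and one must invoke the triviality of the $A$-action on $M$ at every place where a term of the form $d_i(a) \cdot (\text{element of } M)$ or $(\text{element of } M) \cdot d_j(b)$ would otherwise survive; once this is done, each remaining step is a routine expansion.
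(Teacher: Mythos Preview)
Your proposal is correct and follows essentially the same route as the paper's own proof: both unfold the associativity and higher-derivation identities on $A\oplus M$ to obtain $\delta_{\mathrm{Hoch}}\psi=0$ and $\delta_{\mathrm{Hoch}}'\chi_k+\delta_k\psi=0$, then handle section-independence by setting $\phi=s'-s$ and computing $(\psi';\chi_1',\ldots,\chi_N')-(\psi;\chi_1,\ldots,\chi_N)=\partial\phi$. If anything, your write-up is slightly more explicit than the paper's about the sign $(-1)^2=1$ in the coboundary formula and about where the triviality of the $A$-action on $M$ is invoked.
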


\begin{proof}
The tuple $(A \oplus M, d_1^{A \oplus M}, \ldots, d_N^{A \oplus M})$ is an AssHDer pair if and only if
\begin{align}
((a \oplus m) \cdot (b \oplus n)) \cdot (c \oplus p) = ( a \oplus m ) \cdot ( (b \oplus n) \cdot (n \oplus p)), \label{prop-3-1}\\
d_k^{A \oplus M} ((a \oplus m) \cdot (b \oplus n)) = \sum_{i+ j = k} d_i^{A \oplus M} ( a \oplus m) \cdot d_j^{A \oplus M} (b \oplus n), \label{prop-3-2}
\end{align}
for all $a \oplus m,~ b \oplus n,~ c \oplus p \in A \oplus M$ and $k = 1, \ldots, N$. The condition (\ref{prop-3-1}) is equivalent to $\psi (ab, c) = \psi (a, bc)$, or, equivalently, $\delta_{\mathrm{Hoch}} \psi = 0$. The condition (\ref{prop-3-2}) is equivalent to
\begin{align*}
d_k^M ( \psi (a, b)) + \chi_k (ab) = \sum_{i+j = k} \psi ( d_i (a), d_j (b)), ~~~ \text{ for } k = 1, \ldots, N.
\end{align*}
This is same as $\delta_{\mathrm{Hoch}}' \chi_k + \delta_k \psi = 0$, for all $k =1,\ldots, N$. Therefore, (\ref{prop-3-1}) and (\ref{prop-3-2}) holds if and only if $\partial ( \psi; \chi_1, \ldots, \chi_N) = 0$.

Let $s_1, s_2$ be two sections of $p$. Define a map $\phi :A \rightarrow M$ by $\phi (a) := s_1 (a) - s_2 (a).$ Then we have
\begin{align*}
\psi^1 (a, b) = ~& s_1 (a) \cdot s_1(b) - s_1 (ab) \\
=~& ( s_2 (a) + \phi (a) ) \cdot ( s_2 (b) + \phi (b)) - s_2 (ab) - \phi (ab) \\
=~& \psi^2 (a, b) - \phi (ab) \qquad (\text{as } \phi(a) , \phi (b) \in M)
\end{align*}
and
\begin{align*}
\chi_k^1 (a) =~& \widehat{d}_k (s_1 (a)) - s_1 ( d_k (a)) \\
=~& \widehat{d}_k ( s_2 (a) + \phi (a)) - s_2 ( d_k (a)) - \phi ( \phi_A (a)) \\
=~& \chi^2_k (a) + d^M_k ( \phi (a)) - \phi (d_k (a)), \quad \text{ for } k = 1, \ldots, N.
\end{align*}
This shows that $(\psi^1, \chi_1^1, \ldots, \chi_N^1) - ( \psi^2, \chi^2_1, \ldots, \chi^2_N) = \partial \phi$. Hence they correspond to the same cohomology class.
\end{proof}

\begin{thm}\label{cent-thm}
Let $(A, d_1, \ldots, d_n)$ be an AssHDer pair and $(M, d_1^M, \ldots, d_N^M)$ be a trivial AssHDer pair. Then the isomorphism classes of central extensions of $A$ by $M$ are classified by the second cohomology group $H^2_{\mathrm{AssHDer}} (A, M).$
\end{thm}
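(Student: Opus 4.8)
The plan is to establish a well-defined bijection between the set of isomorphism classes of central extensions of $(A, d_1, \ldots, d_N)$ by $(M, d_1^M, \ldots, d_N^M)$ and the cohomology group $H^2_{\mathrm{AssHDer}}(A, M)$. In one direction, given a central extension as in (\ref{eqn-t}), I would choose a section $s$ of $p$ and, using the identification $\widehat{A} \cong A \oplus M$ described before Proposition \ref{prop-3}, produce the pair $(\psi; \chi_1, \ldots, \chi_N)$. By Proposition \ref{prop-3}, this pair is a $2$-cocycle (since $\widehat{A}$ genuinely is an AssHDer pair), and its cohomology class is independent of the chosen section. Thus the assignment $[\text{extension}] \mapsto [(\psi; \chi_1, \ldots, \chi_N)] \in H^2_{\mathrm{AssHDer}}(A, M)$ is defined; the first genuine thing to check here is that isomorphic central extensions give the same class, which follows by transporting a section along the isomorphism $\eta$ and observing that the two resulting cocycles differ by $\partial \phi$ for the appropriate $\phi : A \to M$, by the same computation as in the second half of the proof of Proposition \ref{prop-3}.

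Conversely, given a $2$-cocycle $(\psi; \chi_1, \ldots, \chi_N) \in C^2_{\mathrm{AssHDer}}(A, M)$, I would build a central extension by putting on $\widehat{A} := A \oplus M$ the product $(a \oplus m)(b \oplus n) = ab \oplus (an + mb + \psi(a,b))$ — which reduces to $ab \oplus \psi(a,b)$ on the relevant terms since $M$ is a trivial representation here, but more precisely one takes the trivial $A$-bimodule $M$ so the extension is central — together with the linear maps $\widehat{d}_k(a \oplus m) = d_k(a) \oplus d_k^M(m) + \chi_k(a)$. The cocycle condition $\partial(\psi; \chi_1, \ldots, \chi_N) = 0$ is, by the computation in Proposition \ref{prop-3}, exactly what is needed to guarantee associativity of this product and the higher-derivation identities (\ref{hder-id}) for the $\widehat{d}_k$. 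Then $i : m \mapsto 0 \oplus m$ and $p : a \oplus m \mapsto a$ give a short exact sequence of AssHDer pairs, and the centrality condition $i(m) \cdot \widehat{a} = 0 = \widehat{a} \cdot i(m)$ holds because the product of $M$ is trivial. One checks that cohomologous cocycles yield isomorphic extensions: if $(\psi'; \chi_k') = (\psi; \chi_k) + \partial \phi$, the map $\eta(a \oplus m) = a \oplus (m + \phi(a))$ is an isomorphism of central extensions, using again the coboundary formulas from Proposition \ref{prop-3}.

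Finally I would verify that the two constructions are mutually inverse. Starting from a cocycle, building the extension $A \oplus M$, and then reading off a cocycle using the obvious section $s(a) = a \oplus 0$ returns the original $(\psi; \chi_1, \ldots, \chi_N)$ on the nose. Starting from an extension (\ref{eqn-t}), choosing a section, forming the cocycle, and rebuilding the extension produces an AssHDer pair isomorphic to $\widehat{A}$ via the section-induced identification $\widehat{A} \cong A \oplus M$ — and this identification is precisely an isomorphism of central extensions. Combining these, the assignment descends to a bijection on isomorphism classes.

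The main obstacle is bookkeeping rather than conceptual: one must be careful that the same formulas appearing in Proposition \ref{prop-3} are being used in both directions with the signs and index conventions of the coboundary $\partial$ (the $(-1)^n$ factors with $n = 2$, and the definitions of $\delta_{\mathrm{Hoch}}$, $\delta_{\mathrm{Hoch}}'$, $\delta_k$) so that ``$\partial(\psi; \chi_1, \ldots, \chi_N) = 0$'' is literally equivalent to the structural identities for $A \oplus M$. Since Proposition \ref{prop-3} already records exactly this equivalence for an arbitrary section, most of the work is subsumed by it, and what remains is to check the two compatibility-with-isomorphism statements and the inverse-pair statements, each of which is a short diagram chase combined with the coboundary formulas already exhibited.
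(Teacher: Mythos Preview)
Your proposal is correct and follows essentially the same approach as the paper: use Proposition \ref{prop-3} to pass from extensions to cocycles, build the extension on $A \oplus M$ from a cocycle, and check both assignments are well-defined on classes and mutually inverse. One minor simplification the paper makes in the first direction: when you transport a section $s$ along the isomorphism $\eta$ to obtain $s' = \eta \circ s$, the resulting cocycles $(\psi'; \chi_1', \ldots, \chi_N')$ and $(\psi; \chi_1, \ldots, \chi_N)$ are literally \emph{equal}, not merely cohomologous, because $\eta|_M = \mathrm{id}_M$; so no auxiliary $\phi$ is needed at that step.
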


\begin{proof}
Let $(\widehat{A}, \widehat{d}_1, \ldots, \widehat{d}_N)$ and $(\widehat{A}', \widehat{d}_1', \ldots, \widehat{d}_N')$ be two isomorphic central extensions and the isomorphism is given by $\eta : \widehat{A} \rightarrow \widehat{A}'$. Let $s : A \rightarrow \widehat{A}$ be a section of $p$. Then
\begin{align*}
p ' \circ ( \eta \circ s) = (p' \circ \eta ) \circ s = p \circ s = \mathrm{id}_A.
\end{align*}
This shows that $s' := \eta \circ s$ is a section of $p'$. Since $\eta$ is a morphism of AssHDer pairs, we have $\eta|_M = \mathrm{id}_M$. Thus,
\begin{align*}
\psi' (a, b) = s'(a) \cdot s'(b) - s'(ab)  =~& (\eta \circ s)(a) \cdot (\eta \circ s) (b) - (\eta \circ s) (ab) \\
=~& s(a) \cdot s(b) - s( ab) = \psi (a, b)
\end{align*}
and
\begin{align*}
\chi_k' (a) = d_k^{\widehat{A}'} ( s'(a)) - s' ( d_k (a)) 
=~& d_k^{\widehat{A}'} ( \eta \circ s (a)) - \eta \circ s ( d_k (a)) \\
=~& d_k^{\widehat{A}} ( s(a)) - s ( d_k (a)) = \chi_k (a).
\end{align*}
Therefore, isomorphic central extensions give rise to same $2$-cocycle, hence, correspond to same element in $H^2_{\mathrm{AssHDer}}(A, M).$

Conversely, let $(\psi; \chi_1, \ldots, \chi_N)$ and $(\psi'; \chi_1', \ldots, \chi_N')$ be two cohomologous $2$-cocycles. Therefore, there exists a map $\phi : A \rightarrow M$ such that
\begin{align*}
(\psi; \chi_1, \ldots, \chi_N) - (\psi'; \chi_1', \ldots, \chi_N') = \partial \phi.
\end{align*}
The AssHDer pair structures on $A \oplus M$ (given in Proposition \ref{prop-3}) corresponding to the above $2$-cocycles are isomorphic via the map $\eta : A \oplus M \rightarrow A \oplus M$ given by $\eta ( a \oplus m) = a \oplus m + \phi(a)$. Hence the proof.
\end{proof}

\section{Abelian extensions}
It is known that equivalence classes of abelian extensions of an associative algebra by a bimodule are classified by the second Hochschild cohomology group \cite{loday-book}. In this section, we generalize this result to AssHDer pairs.

Let $(A, d_1, \ldots, d_N )$ be an AssHDer pair of rank $N$. Suppose $(M, d_1^M, \ldots, d_N^M)$ is a vector space equipped with linear maps. It can be considered as an AssHDer pair in which $M$ is given by the trivial multiplication.

\begin{defn}
An abelian extension of $(A, d_1, \ldots, d_N)$ by $(M, d_1^M, \ldots, d_N^M)$ is an exact sequence of AssHDer pairs
\begin{align}\label{abel-ext-eqn}
\xymatrix{
0 \ar[r] & (M, d_1^M, \ldots, d_N^M) \ar[r]^{i} & (E, d_1^E, \ldots, d_N^M) \ar[r]^{p} & (A, d_1, \ldots, d_N) \ar[r] & 0
}
\end{align}
together with a $\mathbb{K}$-splitting $s : A \rightarrow E$.
\end{defn}

Such an extension induces a representation of $(A, d_1, \ldots, d_N)$ on $(M, d_1^M, \ldots, d_N^M)$ given by $am = \mu_E (s(a), i(m))$ and $ma = \mu_E ( i(m), s(a))$, where $\mu_E$ denotes the associative multiplication on $E$. This action is independent of the section $s$.

\begin{defn}
Two abelian extensions $(E, d_1^E, \ldots, d_N^E)$ and $(E', d_1^{E'}, \ldots, d_N^{E'})$ are said to be  equivalent if there is a morphism $\Psi : E \rightarrow E'$ of AssHDer pairs which makes the following diagram commutative
\[
\xymatrix{
0 \ar[r] & (M, d_1^M, \ldots, d_{N}^M) \ar[r] \ar@{=}[d] & (E, d_1^E, \ldots, d_N^E) \ar[r]^p \ar[d]_\Psi & (A, d_1, \ldots, d_N) \ar@{=}[d] \ar[r] & 0 \\
0 \ar[r] & (M, d_1^M, \ldots, d_{N}^M) \ar[r] & (E', d_1^{E'}, \ldots, d_N^{E'}) \ar[r]_{p'} & (A, d_1, \ldots, d_N) \ar[r] & 0.
}
\]
\end{defn}

Let $(A, d_1, \ldots, d_N)$ be an AssHDer pair and $(M, d_1^M, \ldots, d_N^M)$ be a representation of it. We denote by $Ext (A, M)$ the set of equivalence classes of abelian extensions in which the induced representation of $(A, d_1, \ldots, d_N)$ on $(M, d_1^M, \ldots, d_N^M)$ is the given one.

\begin{thm}\label{abel-cla}
There is a one-to-one correspondence $H^2_{\mathrm{AssHDer}}(A, M ) \cong Ext(A, M).$
\end{thm}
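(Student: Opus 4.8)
The plan is to construct mutually inverse maps between $Ext(A,M)$ and $H^2_{\mathrm{AssHDer}}(A,M)$ by the standard "choose a section, extract a cocycle" argument, adapted to the higher-derivation setting exactly as in the proof of Theorem \ref{cent-thm}. The key difference from the central case is that now $M$ carries a genuine $A$-bimodule structure (induced by the extension), so the resulting $2$-cochain will be a cocycle with coefficients in this nontrivial representation, and the Hochschild part of the computation uses the full bimodule action rather than the trivial one.

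First I would start from an abelian extension \eqref{abel-ext-eqn} with its $\mathbb{K}$-splitting $s : A \to E$. Using $s$ we identify $E \cong A \oplus M$ as vector spaces, and transport the multiplication $\mu_E$ and the linear maps $d_k^E$ to $A \oplus M$. As in the central case this yields a $2$-cochain $(\psi; \chi_1, \ldots, \chi_N) \in C^2_{\mathrm{AssHDer}}(A,M)$, where $\psi(a,b) = s(a)\cdot s(b) - s(ab)$ and $\chi_k(a) = d_k^E(s(a)) - s(d_k(a))$, and the induced product is $(a\oplus m)(b\oplus n) = ab \oplus (an + mb + \psi(a,b))$ with $d_k^{A\oplus M}(a \oplus m) = d_k(a) \oplus (d_k^M(m) + \chi_k(a))$. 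I would then check that the associativity of $\mu_E$ is equivalent to $\delta_{\mathrm{Hoch}}\psi = 0$ (this is the classical computation, now with the nontrivial bimodule action contributing the $a\psi(b,c)$ and $\psi(a,b)c$ terms), and that the higher-derivation identity $d_k^E(xy) = \sum_{i+j=k} d_i^E(x)\,d_j^E(y)$ is equivalent to $\delta_{\mathrm{Hoch}}'\chi_k + \delta_k\psi = 0$ for each $k$. Together these say exactly $\partial(\psi;\chi_1,\ldots,\chi_N) = 0$, so every abelian extension with the prescribed action produces a $2$-cocycle. Changing the section $s$ to $s' = s + \phi$ with $\phi : A \to M$ changes $(\psi;\chi_1,\ldots,\chi_N)$ by $\partial\phi$ — the computation is identical to the one in Proposition \ref{prop-3}, using $\phi(a),\phi(b) \in M$ and the bimodule action — so the cohomology class in $H^2_{\mathrm{AssHDer}}(A,M)$ is well defined; and an equivalence $\Psi : E \to E'$ restricts to $\mathrm{id}_M$ and sends a section to a section, so equivalent extensions give the same class. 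This defines $Ext(A,M) \to H^2_{\mathrm{AssHDer}}(A,M)$.

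Conversely, given a $2$-cocycle $(\psi;\chi_1,\ldots,\chi_N)$, I would define $E := A \oplus M$ with the product $(a\oplus m)(b\oplus n) = ab \oplus (an + mb + \psi(a,b))$ and linear maps $d_k^E(a\oplus m) = d_k(a) \oplus (d_k^M(m) + \chi_k(a))$; the cocycle conditions $\delta_{\mathrm{Hoch}}\psi = 0$ and $\delta_{\mathrm{Hoch}}'\chi_k + \delta_k\psi = 0$ are precisely what is needed for $(E, d_1^E, \ldots, d_N^E)$ to be an AssHDer pair (this is the reverse reading of the equivalences above), and with $i(m) = 0\oplus m$, $p(a\oplus m) = a$, $s(a) = a\oplus 0$ one gets an abelian extension whose induced representation on $M$ is the given one. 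Finally I would verify that cohomologous cocycles yield equivalent extensions: if the difference is $\partial\phi$, the map $\Psi : A\oplus M \to A\oplus M$, $\Psi(a\oplus m) = a \oplus (m + \phi(a))$, is an isomorphism of AssHDer pairs making the equivalence diagram commute — checking that $\Psi$ respects the product uses $\delta_{\mathrm{Hoch}}$-part of $\partial\phi$, and that it respects $d_k^E$ uses the $\delta_k$-part. This shows the two assignments are well defined and mutually inverse, giving the bijection.

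The main obstacle is purely bookkeeping: verifying that the higher-derivation compatibility $d_k^E(xy) = \sum_{i+j=k} d_i^E(x) d_j^E(y)$ on $A \oplus M$ unwinds exactly to $\delta_{\mathrm{Hoch}}'\chi_k + \delta_k\psi = 0$. One must expand $d_k^E((a\oplus m)(b\oplus n))$ and $\sum_{i+j=k} d_i^E(a\oplus m)\, d_j^E(b\oplus n)$, carefully collect the terms involving $\psi$, the terms involving the $\chi$'s, and the terms involving the module maps $d_j^M$, and match them against the definitions of $\delta_{\mathrm{Hoch}}'$ and $\delta_k$ (recalling $\delta_k f = \sum_{i_1+\cdots+i_n = k} f\circ(d_{i_1}\otimes\cdots\otimes d_{i_n}) - d_k^M\circ f$). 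The convention $d_0 = \mathrm{id}_A$, $d_0^M = \mathrm{id}_M$ must be tracked throughout so that the $i=0$ and $j=0$ contributions reproduce the $am$, $mb$, $d_k^M(m)$, $d_k^M(n)$ terms correctly. Once this single identity is pinned down, everything else follows by the same manipulations used in Sections 3 and 4 for the central case.
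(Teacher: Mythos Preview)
Your proposal is correct and follows essentially the same approach as the paper: build the two maps ``cocycle $\leftrightarrow$ extension on $A\oplus M$'' and verify they are well defined and mutually inverse, with the higher-derivation identity on $E$ unwinding to $\delta_{\mathrm{Hoch}}'\chi_k + \delta_k\psi = 0$. The only cosmetic difference is that the paper presents the cocycle-to-extension direction first and uses the notation $(f;f_1,\ldots,f_N)$, whereas you start from the extension and use $(\psi;\chi_1,\ldots,\chi_N)$; the computations are identical.
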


\begin{proof}
Let $(f; f_1, \ldots, f_N) \in C^2_{\mathrm{AssHDer}} (A, M)$ be a $2$-cocycle. In other words,  we  have $\delta_{\mathrm{Hoch}} f = 0 $ and $\delta_{\mathrm{Hoch}}' f_k + \delta_k f =0$, for all $k =1, \ldots, N$. Consider the vector space $E = A \oplus M$ with the following structure maps
\begin{align*}
\mu_E ((a, m), (b,n)) =~& ( ab, an + mb + f(a,b))   ~~~~\text{ and }\\
d_k^E ((a,m)) =~& ( d_k (a), d_k^M (m) + f_k (a)),
\end{align*}
for $k =1, \ldots, N$. (Observe that when the $2$-cocycle  $(f; f_1, \ldots, f_N)$ is zero, this is the semi-direct product. See Proposition \ref{semi-prop}.) The multiplication $\mu_E$ is associative as $\delta_{\mathrm{Hoch}} f =0$. Moreover $\{ d_1^E, \ldots, d_N^E \}$ is a higher derivation on $E$. Hence $0 \rightarrow M \rightarrow E \rightarrow A \rightarrow 0$ is an abelian extension of AssHDer pair  with the obvious splitting.

Let $(f; f_1, \ldots, f_N)$ and $(f'; f_1', \ldots, f_N')$ be two cohomologous $2$-cocycles. Then there exists $h \in C^1_{\mathrm{AssHDer}} (A, M) = \mathrm{Hom} (A, M)$ such that
\begin{align*}
(f; f_1, \ldots, f_N) - (f'; f_1', \ldots, f_N') = \partial h.
\end{align*}
The corresponding AssHDer pair structures are related by the equivalence $\Psi : E \rightarrow E', ~ (a, m) \mapsto (a, m + h (a))$. Hence the map $H^2_{\mathrm{AssHDer}}(A, M ) \rightarrow Ext(A, M)$ is well defined.

\medskip

Conversely, given any abelian extension (\ref{abel-ext-eqn}) with splitting $s$, the vector space $E$ is isomorphic to $A \oplus M$ and with this identification $s(a) = (a, 0)$. The maps $i$ and $p$ are the obvious ones with respect to the above splitting. The map $p$ is an algebra map implies that $p \circ \mu_E ((a,0), (b, 0)) = ab$. This implies that $\mu_E ((a, 0), (b, 0)) = (ab, f(a, b))$, for some $f \in \mathrm{Hom}(A^{\otimes 2}, M)$. The associativity of $\mu_E$ implies that $\delta_{\mathrm{Hoch}} f = 0$. Moreover, since $p \circ d_k^E = d_k \circ p$, we have $d_k^E (a, 0) = (d_k (a), f_k (a)) $, for some $f_k \in \mathrm{Hom} (A, M),$ for $k =1, \ldots, N$. The sequence $\{ d_1^E, \ldots, d_N^E \}$ is a higher derivation implies that
\begin{align*}
\delta_{\mathrm{Hoch}}' f_k + \delta_k f = 0, ~~~ \text{ for } k =1, \ldots, N.
\end{align*}
Hence $(f; f_1, \ldots, f_N) \in C^2_{\mathrm{AssHDer}} (A, M)$ is a $2$-cocycle.

Similarly, one can show that any two equivalent extensions $(E, d_1^E, \ldots, d_N^E)$ and $(E', d_1^{E'}, \ldots, d_N^{E'})$ are related by a map $E = A \oplus M \xrightarrow{\Psi } A \oplus M = E'$ defined by $\Psi (a, m) = (a, m + h(a)),$ for some $h \in \mathrm{Hom}(A, M)$. Since $\Psi$ is a morphism of AssHDer pairs, we have $(f; f_1, \ldots, f_N) - (f'; f_1', \ldots, f_N') = \partial h$. Here $(f'; f_1', \ldots, f_N') \in C^2_{\mathrm{AssHDer}} (A, M)$ is the $2$-cocycle induced from the extension $E'$. Therefore, the map $Ext(A, M) \rightarrow H^2_{\mathrm{AssHDer}} (A, M)$ is also well defined. Finally, the above two maps are inverses to each other. Hence the proof.
\end{proof}

\section{Formal deformations}

The classical deformation theory of Gerstenhaber for associative algebras has been extended to AssDer pairs in \cite{das-mandal}. In this section, we further extend this to AssHDer pairs.

Let $(A, d_1, \ldots, d_N)$ be an AssHDer pair of rank $N$. We denote the associative multiplication on $A$ by $\mu$. Consider the space $A[[t]]$ of formal power series in $t$ with coefficients from $A$. Then $A[[t]]$ is a $\mathbb{K}[[t]]$-module and when $A$ is finite dimensional, we have $A[[t]] \cong A \otimes_\mathbb{K} \mathbb{K}[[t]]$.

A formal one-parameter deformation of the AssHDer pair $(A, d_1, \ldots, d_N)$ consist of formal power series
\begin{align*}
\mu_t =~& \mu_0 +  t \mu_1  + t^2 \mu_2 + \cdots ~~~~ \in \mathrm{Hom}(A^{\otimes 2}, A) [[t]] ~~~\text{ with } \mu_0 = \mu,\\
d_{1, t} =~& d_{1, 0} + t d_{1,1}+ t^2 d_{1,2} + \cdots ~~~~ \in \mathrm{Hom}(A, A) [[t]] ~~~ \text{ with } d_{1,0} = d_1,\\
\vdots \\
d_{N, t} =~& d_{N, 0} + t d_{N, 1} + t^2 d_{N, 2} + \cdots  ~~~~  \in \mathrm{Hom}(A, A) [[t]] ~~~ \text{ with } d_{N,0} = d_N
\end{align*}
such that $(A[[t]], \mu_t )$ is an associative algebra over $\mathbb{K}[[t]]$ and $\{ d_{1,t}, \ldots, d_{N, t} \}$ constitute a higher derivation of rank $N$ on it.

\begin{remark}
View the higher derivation $\{ d_1, \ldots, d_N \}$ on $A$ as an algebra morphism $A \rightarrow A[[t]]/ (t^{N+1})$ (see Remark \ref{higher-der-as-mor}). First observe that a deformation of the algebra $A$ induces a deformation of the algebra $A[[t]]/ (t^{N+1})$. With this view-point, a deformation of the AssHDer pair $(A, d_1, \ldots, d_N)$ is a deformation of the corresponding algebra morphism $A \rightarrow A[[t]]/ (t^{N+1})$ in the sense of Gerstenhaber and Schack \cite{gers-sch}.
\end{remark}

Therefore, in a formal one-parameter deformation of AssHDer pair, the following relations hold:
\begin{align}
\mu_t ( \mu_t (a, b), c) =~ \mu_t (a, \mu_t (b, c)) \quad \text{ and } \quad 
d_{k, t} (\mu_t (a, b)) =~ \sum_{i+j = k} \mu_t (d_{i, t} (a) , d_{j, t}(b)),
\end{align}
for $k = 1, \ldots, N$. The above identities are equivalent to 
\begin{align*}
\sum_{i+j = n} \mu_i ( \mu_j(a, b), c) =~& \sum_{i+j = n} \mu_i ( a, \mu_j (b,c)), ~~~ \text{ for } n \geq 0,\\
\sum_{i+j = n} d_{k, i} (\mu_j (a, b)) =~& \sum_{i+j = k} \sum_{p +q + r = n} \mu_p ( d_{i, q} (a), d_{j, r} (b)), ~~~ \text{ for } n \geq 0 \text{ and } k = 1, \ldots, N.
\end{align*}
All the identities hold for $n=0$ as $(A, d_1, \ldots, d_N)$ is an AssHDer pair. For $n =1$, we have
\begin{align}
\mu_1 (a, b) c + \mu_1 (ab, c) =~&  a \mu_1 (b, c) + \mu_1 (a, bc), \label{1-co}\\
d_k ( \mu_1 (a, b)) + d_{k,1} (ab) =~& \sum_{i+j = k} \big[ \mu_1 ( d_i (a), d_j (b) ) + d_{i,1}(a) d_j (b) + d_i(a) d_{j, 1}(b) \big], \label{1-cod}
\end{align}
for $k =1, \ldots, N$. The condition (\ref{1-co}) is equivalent to $\delta_{\mathrm{Hoch}} (\mu_1) =0$ whereas the conditions (\ref{1-cod}) are equivalent to $\delta_{\mathrm{Hoch}}' (d_{k,1}) + \delta_k (\mu_1)$ = 0, for $k =1, \ldots, N$. Therefore, we have 
\begin{align*}
\partial ( \mu_1; d_{1,1}, \ldots, d_{N, 1}) =0.
\end{align*}
Hence, we have the following.

\begin{prop}\label{inf-co}
Let $(\mu_t; d_{1, t}, \ldots, d_{N, t})$ be a formal one-parameter deformation of an AssHDer pair $(A, d_1, \ldots, d_N)$. Then the linear term $(\mu_1; d_{1,1}, \ldots, d_{N, 1})$ is a $2$-cocycle in the cohomology of the AssHDer pair $A$ with coefficients in itself.
\end{prop}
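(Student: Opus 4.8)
The plan is to read off the coefficient of $t$ from the two families of deformation equations and recognize the resulting identities as exactly the three components of the cocycle condition $\partial(\mu_1; d_{1,1}, \ldots, d_{N,1}) = 0$ in $C^2_{\mathrm{AssHDer}}(A, A)$. All the real computation has already been carried out in the discussion preceding the statement; the proposition is just its packaging.

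First I would expand the associativity constraint $\mu_t(\mu_t(a,b),c) = \mu_t(a,\mu_t(b,c))$ as a power series in $t$. The $t^0$ term is the associativity of $\mu$, which holds since $(A,d_1,\ldots,d_N)$ is an AssHDer pair. The $t^1$ term is precisely equation (\ref{1-co}), and this is the classical Gerstenhaber identity saying $\delta_{\mathrm{Hoch}}(\mu_1) = 0$; so the first component of $\partial(\mu_1; d_{1,1}, \ldots, d_{N,1})$ vanishes.

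Next, for each $k = 1, \ldots, N$ I would expand the higher-derivation constraint $d_{k,t}(\mu_t(a,b)) = \sum_{i+j=k}\mu_t\big(d_{i,t}(a),d_{j,t}(b)\big)$ (with $d_{0,t} = \mathrm{id}$) in powers of $t$. The $t^0$ term is the AssHDer axiom for $(A,d_1,\ldots,d_N)$. The $t^1$ term is obtained by keeping, on each side, the contributions in which exactly one ``index'' is raised to its first-order part: this produces equation (\ref{1-cod}), which after rearrangement is $\delta_{\mathrm{Hoch}}'(d_{k,1}) + \delta_k(\mu_1) = 0$. Since $n = 2$ gives $(-1)^n = 1$, the definition of $\partial$ on $C^2_{\mathrm{AssHDer}}(A,A)$ reads
\[
\partial(\mu_1; d_{1,1}, \ldots, d_{N,1}) = \big(\delta_{\mathrm{Hoch}}\mu_1;\ \delta_{\mathrm{Hoch}}'d_{1,1} + \delta_1\mu_1,\ \ldots,\ \delta_{\mathrm{Hoch}}'d_{N,1} + \delta_N\mu_1\big),
\]
and by the two expansions above every component is zero, so $(\mu_1; d_{1,1}, \ldots, d_{N,1})$ is a $2$-cocycle.

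The only point requiring care is the bookkeeping in the second expansion: when $\mu_t(d_{i,t}(a),d_{j,t}(b))$ is written out and only the first-order terms are retained, one gets either $\mu_1(d_i(a),d_j(b))$, or $d_i(a)\,d_{j,1}(b)$, or $d_{i,1}(a)\,d_j(b)$, summed over $i+j=k$, while the left side contributes $d_k(\mu_1(a,b)) + d_{k,1}(ab)$; one must check that these reassemble into $\delta_{\mathrm{Hoch}}'(d_{k,1}) + \delta_k(\mu_1)$ using the definitions of $\delta_{\mathrm{Hoch}}'$ and $\delta_k$ for $1$-cochains. This matching of signs and summation ranges is the main (and only) obstacle, and it is exactly the identity recorded as (\ref{1-cod}), so the proof amounts to invoking that identity together with the definition of $\partial$.
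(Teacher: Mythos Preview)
Your proposal is correct and mirrors the paper's approach exactly: the paper also extracts the $t^1$-coefficients from the associativity and higher-derivation constraints to obtain (\ref{1-co}) and (\ref{1-cod}), identifies these with $\delta_{\mathrm{Hoch}}\mu_1=0$ and $\delta_{\mathrm{Hoch}}'d_{k,1}+\delta_k\mu_1=0$, and concludes $\partial(\mu_1;d_{1,1},\ldots,d_{N,1})=0$. As you observe, the proposition is just the packaging of the preceding computation.
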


The $2$-cocycle $(\mu_1; d_{1,1}, \ldots, d_{N, 1})$ is called the infinitesimal of the deformation. In particular, if $(\mu_1; d_{1,1}, \ldots, d_{N, 1}) = \cdots = (\mu_{n-1}; d_{1,n-1}, \ldots, d_{N, n-1}) = 0$ and $(\mu_n; d_{1,n}, \ldots, d_{N, n})$ is non-zero, then $(\mu_n; d_{1,n}, \ldots, d_{N, n})$ is a $2$-cocycle.

Next we define a notion of equivalence between formal deformations of AssHDer pairs.

\begin{defn}
Let $(\mu_t; d_{1,t}, \ldots, d_{N, t})$ and $(\mu_t'; d_{1,t}', \ldots, d_{N, t}')$ be two deformations of an AssHDer pair $(A, d_1, \ldots, d_N)$. They are said to be equivalent if there exists a formal isomorphism $\Phi_t = \sum_{i \geq 0 } t^i \Phi_i : A[[t]] \rightarrow A[[t]]$ with $\Phi_0 = \mathrm{id}_A$ such that $\Phi_t$ is a morphism of AssHDer pairs from $(A[[t]], \mu_t, d_{1,t}, \ldots, d_{N, t} )$ to $(A[[t]], \mu_t', d_{1,t}', \ldots, d_{N, t}').$
\end{defn}

In other words, we must have
\begin{align*}
\Phi_t \circ \mu_t = \mu_t' \circ ( \Phi_t \otimes \Phi_t ) \quad \text{ and } \quad d_{k, t}' \circ \Phi_t = \Phi_t \circ d_{k, t}, ~~~ \text{ for } k =1, \ldots, N.
\end{align*}
They are equivalent to the following equations: for $n \geq 0$,
\begin{align*}
\sum_{i+j= n } \Phi_i \circ \mu_j =~& \sum_{p+q+r = n} \mu_p' \circ (\Phi_q \otimes \Phi_r ),\\
\sum_{i+j = n} d_{k, i}' \circ \Phi_j =~& \sum_{p+q = n} \Phi_p \circ d_{k, q}, ~~~ \text{ for } k =1, \ldots, N.
\end{align*}
The above identities hold for $n = 0$. For $n =1$, we have
\begin{align}
\mu_1 + \Phi_1 \circ \mu =~& \mu_1' + \mu \circ ( \Phi_1 \otimes \mathrm{id}) + \mu \circ ( \mathrm{id} \otimes \Phi_1 ),\label{diff-co}\\
d_k \circ \Phi_1 + d_{k,1}' =~& d_{k, 1} + \Phi_1 \circ d_k , ~~~~ \text{ for } k =1, \ldots, N. \label{diff-cod}
\end{align}
It follows from (\ref{diff-co}) and (\ref{diff-cod}) that
\begin{align*}
(\mu_1; d_{1,1}, \ldots, d_{N, 1}) - (\mu_1'; d_{1,1}', \ldots, d_{N, 1}') = \partial \Phi_1.
\end{align*}
\begin{prop}
The infinitesimals corresponding to equivalent deformations are cohomologous, hence, correspond to a same cohomology class.
\end{prop}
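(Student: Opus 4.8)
The plan is to take the equivalence $\Phi_t = \sum_{i \geq 0} t^i \Phi_i$ between the two deformations, extract the coefficient of $t$ in the two compatibility identities defining an equivalence, and recognize the resulting relations as precisely the two (families of) components of the operator $\partial$ applied to the $1$-cochain $\Phi_1$.

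First I would record that an equivalence means $\Phi_t \circ \mu_t = \mu_t' \circ (\Phi_t \otimes \Phi_t)$ and $d_{k,t}' \circ \Phi_t = \Phi_t \circ d_{k,t}$ for $k = 1, \ldots, N$. Comparing the coefficients of $t^1$ on both sides, and using $\Phi_0 = \mathrm{id}_A$, $\mu_0 = \mu$, $d_{k,0} = d_k$, one obtains exactly equations (\ref{diff-co}) and (\ref{diff-cod}); this is the computation already carried out in the paragraph preceding the statement.

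Next I would rewrite these in cohomological terms. Since $\Phi_1 \in \mathrm{Hom}(A, A) = C^1_{\mathrm{AssHDer}}(A, A)$, the definition of $\partial$ on $1$-cochains gives $\partial \Phi_1 = (\delta_{\mathrm{Hoch}} \Phi_1;\, -\delta_1 \Phi_1, \ldots, -\delta_N \Phi_1)$. Equation (\ref{diff-co}) says $\mu_1(a,b) - \mu_1'(a,b) = a \Phi_1(b) - \Phi_1(ab) + \Phi_1(a) b = (\delta_{\mathrm{Hoch}} \Phi_1)(a,b)$, i.e. the first components agree. For each $k$, equation (\ref{diff-cod}) rearranges to $d_{k,1} - d_{k,1}' = d_k \circ \Phi_1 - \Phi_1 \circ d_k$; since for a $1$-cochain the formula for $\delta_k$ (with $M = A$ and $d_k^M = d_k$) reduces to $\delta_k \Phi_1 = \Phi_1 \circ d_k - d_k \circ \Phi_1$, this is precisely $d_{k,1} - d_{k,1}' = -\delta_k \Phi_1$. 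Assembling all $N+1$ components yields $(\mu_1; d_{1,1}, \ldots, d_{N,1}) - (\mu_1'; d_{1,1}', \ldots, d_{N,1}') = \partial \Phi_1$, so the two infinitesimals are cohomologous and define the same class in $H^2_{\mathrm{AssHDer}}(A, A)$.

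The argument is essentially bookkeeping, and the only place that needs care is tracking signs: one must verify that the $-\delta_k$ occurring in the definition of $\partial$ on $C^1$ matches the difference $d_k \circ \Phi_1 - \Phi_1 \circ d_k$ produced by (\ref{diff-cod}), and not its negative. Beyond this sign-matching there is no real obstacle, and the higher-order content of the equivalence conditions in $t$ plays no role for this statement.
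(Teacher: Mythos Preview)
Your proposal is correct and follows exactly the paper's approach: the paper itself does not give a separate proof of this proposition, since the equality $(\mu_1; d_{1,1}, \ldots, d_{N,1}) - (\mu_1'; d_{1,1}', \ldots, d_{N,1}') = \partial \Phi_1$ is derived in the lines immediately preceding the statement from the $t^1$-coefficients of the equivalence conditions, precisely as you do. Your explicit sign check that $-\delta_k \Phi_1 = d_k \circ \Phi_1 - \Phi_1 \circ d_k$ matches the right-hand side of (\ref{diff-cod}) is the only thing added, and it is correct.
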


\begin{defn}
A deformation $(\mu_t; d_{1,t}, \ldots, d_{N, t})$ of an AssHDer pair is said to be trivial if it is equivalent to the undeformed deformation $(\mu_t' = \mu; d_{1, t}' = d_1, \ldots, d_{N, t}' = d_N).$
\end{defn}

\begin{thm}\label{2-thm}
If $H^2_{\mathrm{AssHDer}} (A, A) = 0$ then every formal deformation of the AssHDer pair $(A, d_1, \ldots, d_N)$ is trivial.
\end{thm}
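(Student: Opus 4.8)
The plan is to adapt Gerstenhaber's classical rigidity argument to AssHDer pairs, removing the deformation order by order through a sequence of formal equivalences.

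First I would take an arbitrary formal deformation $(\mu_t; d_{1,t}, \ldots, d_{N,t})$ of $(A, d_1, \ldots, d_N)$. By Proposition \ref{inf-co} its infinitesimal $(\mu_1; d_{1,1}, \ldots, d_{N,1})$ is a $2$-cocycle in $C^2_{\mathrm{AssHDer}}(A, A)$. Since $H^2_{\mathrm{AssHDer}}(A,A) = 0$, there is a $1$-cochain $\Phi_1 \in \mathrm{Hom}(A,A) = C^1_{\mathrm{AssHDer}}(A,A)$ with $(\mu_1; d_{1,1}, \ldots, d_{N,1}) = \partial \Phi_1$. I would then form the formal automorphism $\Phi_t = \mathrm{id}_A + t \Phi_1$ of $A[[t]]$, which satisfies $\Phi_0 = \mathrm{id}_A$ and is invertible over $\mathbb{K}[[t]]$, and use it to transport the deformation: the equivalent deformation is given by $\mu_t' = \Phi_t \circ \mu_t \circ (\Phi_t^{-1} \otimes \Phi_t^{-1})$ and $d_{k,t}' = \Phi_t \circ d_{k,t} \circ \Phi_t^{-1}$ for $k = 1, \ldots, N$. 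Comparing coefficients of $t$ exactly as in equations (\ref{diff-co}) and (\ref{diff-cod}) gives $(\mu_1'; d_{1,1}', \ldots, d_{N,1}') = (\mu_1; d_{1,1}, \ldots, d_{N,1}) - \partial \Phi_1 = 0$, so after this replacement the deformation has vanishing linear term.

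Next I would iterate. Suppose inductively that all terms of order $1, \ldots, n-1$ vanish, so that $(\mu_n; d_{1,n}, \ldots, d_{N,n})$ is the first potentially nonzero term; by the remark following Proposition \ref{inf-co} it is again a $2$-cocycle, hence equals $\partial \psi_n$ for some $\psi_n \in \mathrm{Hom}(A,A)$ because $H^2_{\mathrm{AssHDer}}(A,A) = 0$. Applying the automorphism $\Phi_t^{(n)} = \mathrm{id}_A + t^n \psi_n$ and expanding the equivalence relations order by order, one sees that all terms of order $< n$ are unchanged while the order-$n$ term becomes $(\mu_n; d_{1,n}, \ldots, d_{N,n}) - \partial \psi_n = 0$. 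Composing all these automorphisms, $\Psi_t = \cdots \circ \Phi_t^{(n)} \circ \cdots \circ \Phi_t^{(1)}$ with $\Phi_t^{(1)} = \mathrm{id}_A + t\Phi_1$: since $\Phi_t^{(n)} - \mathrm{id}_A$ has $t$-order at least $n$, each coefficient of $\Psi_t$ stabilizes after finitely many factors, so $\Psi_t$ is a well-defined formal automorphism with $\Psi_0 = \mathrm{id}_A$ carrying the deformation to $(\mu; d_1, \ldots, d_N)$; that is, the deformation is trivial.

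I expect the only real work to be the order-$n$ bookkeeping in the inductive step: one must check carefully that transporting by $\mathrm{id}_A + t^n \psi_n$ leaves all terms of order below $n$ untouched and alters the order-$n$ cochain exactly by the coboundary $\partial \psi_n$. This is the higher-degree analogue of the $n = 1$ computation (\ref{diff-co})--(\ref{diff-cod}) and amounts to a direct expansion using the definition of $\partial$ on $C^2_{\mathrm{AssHDer}}(A,A)$: the associative component reproduces the usual Gerstenhaber computation, while each of the $N$ derivation components reproduces the AssDer-type computation of \cite{das-mandal}. Beyond this, the argument is entirely formal, the $t$-adic convergence of the infinite composition $\Psi_t$ being the only other point needing a remark.
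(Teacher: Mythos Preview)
Your proposal is correct and follows essentially the same approach as the paper: kill the lowest nonvanishing term using that it is a $2$-cocycle and hence a coboundary, transport by $\mathrm{id}_A + t^n\psi_n$, and iterate. The only cosmetic difference is the direction of transport (you write $\mu_t' = \Phi_t \circ \mu_t \circ (\Phi_t^{-1}\otimes\Phi_t^{-1})$ while the paper writes $\mu_t' = \Phi_t^{-1}\circ\mu_t\circ(\Phi_t\otimes\Phi_t)$), and you are somewhat more explicit than the paper about the inductive step and the $t$-adic convergence of the composite equivalence.
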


\begin{proof}
Let $(\mu_t; d_{1, t}, \ldots, d_{N, t})$ be a deformation of the AssHDer pair $(A, d_1, \ldots, d_N)$. Then by Proposition \ref{inf-co}, the linear term $(\mu_1; d_{1,1}, \ldots, d_{N,1})$ is a $2$-cocycle. Therefore, it is a coboundary (follows from the hypothesis), say $(\mu_1; d_{1,1}, \ldots, d_{N,1}) = \partial \Phi_1$, for some $\Phi_1 \in C^1_{\mathrm{AssHDer}} (A, A) = \mathrm{Hom}(A, A).$

We set $\Phi_t = \mathrm{id}_A + t \Phi_1 : A [[t]] \rightarrow A[[t]]$ and define
\begin{align}\label{2-rigid}
\mu_t' = \Phi_t^{-1} \circ \mu_t \circ ( \Phi_t \otimes \Phi_t)  ~~~ \text{ and } ~~~ d_{k, t}' = \Phi_t^{-1} \circ d_{k, t} \circ \Phi_t, \text{ for } k =1, \ldots, N.
\end{align}
By definition, $(\mu_t'; d_{1,t}', \ldots, d_{N, t}')$ is equivalent to $(\mu_t; d_{1, t}, \ldots, d_{N, t}).$ Moreover, it follows from (\ref{2-rigid}) that
\begin{align*}
\mu_t' = \mu + t^2 \mu_2' + \cdots \quad \text{ and } \quad d_{k, t}' = d_k + t^2 d_{k, 2}' + \cdots, ~~~\text{ for } k =1, \ldots, N.
\end{align*}
In other words, the linear terms are vanish. By repeating this argument, we conclude the result.
\end{proof}

An AssHDer pair is said to be rigid if every deformation of it is equivalent to the undeformed one. The above theorem says that the vanishing of the second cohomology is a sufficient condition for the rigidity of an AssHDer pair.

\subsection{Finite order deformations and their extensions}

In this subsection, we consider the extension of a finite order deformation of an AssHDer pair to a deformation of next order. Given a finite order deformation, we associate a $3$-cocycle in the cohomology of the AssHDer pair, called the obstruction cocycle. When the corresponding cohomology class is trivial, the deformation extends to a deformation of next order.

Let $(A, d_1, \ldots, d_N)$ be an AssHDer pair of rank $N$. A deformation of order $n$ consist of sums $(\mu_t; d_{1, t}, \ldots, d_{N, t})$ where
\begin{align*}
\mu_t = \sum_{i=0}^n t^i \mu_i, \quad d_{1,t} = \sum_{i=0}^n t^i d_{1, i}, \quad \ldots \quad , d_{N, t} = \sum_{i=0}^n t^i d_{N, i}
\end{align*}
such $(A[[t]] / (t^{n+1}), \mu_t, d_{1,t}, \ldots, d_{N, t})$ is an AssHDer pair over $\mathbb{K}[[t]]/ (t^{n+1})$.

Therefore, the following identities must hold: for $s = 0, 1, \ldots, n$,
\begin{align*}
\sum_{p+q = s} \mu_p (\mu_q (a, b), c) =~& \sum_{p+q = s} \mu_p (a, \mu_q (b, c)),\\
\sum_{p+q = s} d_{k, p} ( \mu_q (a, b)) =~& \sum_{i+j = k} \sum_{p+q+r = s} \mu_p ( d_{i, q}(a), d_{j, r} (b)), ~~~ \text{ for } k = 1, \ldots, N.
\end{align*}
In other words, for $s=0, 1, \ldots, n$,
\begin{align}
\frac{1}{2} \sum_{p+q=s, p,q > 0} [\mu_p, \mu_q] =~& - [\mu, \mu_s ] \label{n-def-eqn-1}\\
~~\mbox{and} ~~~ \sum_{p+q=s, p, q >0} [d_{k,p}, \mu_q ]_N =~& -[ d_k , \mu_s]_N + [\mu, d_{k,s}]_N, ~~ \text{ for } k =1, \ldots, N.  \label{n-def-eqn-2}
\end{align}

Let $(\mu_{n+1}; d_{1, n+1}, \ldots, d_{N, n+1}) \in C^2_{\mathrm{AssHDer}}(A, A)$ be a $2$-cochain such that $(\mu_t' = \mu_t + t^{n+1} \mu_{n+1}; d_{1, t}' = d_{1, t} + t^{n+1} d_{1, n+1}, \ldots, d_{N, t}' = d_{N, t} + t^{n+1} d_{N, n+1})$ defines a deformation of order $n+1$. Therefore, the above set of relations also holds for $s = n+1$,
\begin{align}
\sum_{p+q = n+1} \mu_p (\mu_q (a, b), c) =~& \sum_{p+q = n+1} \mu_p (a, \mu_q (b, c)), \label{ext-n+1}\\
\sum_{p+q = n+1} d_{k, p} ( \mu_q (a, b)) =~& \sum_{i+j = k} \sum_{p+q+r = n+1} \mu_p ( d_{i, q}(a), d_{j, r} (b)), ~~~ \text{ for } k = 1, \ldots, N. \label{extn+1}
\end{align}
In such a case, we say that the deformation $(\mu_t; d_{1,t}, \ldots, d_{N, t})$ is extensible. Note that the identities (\ref{ext-n+1}) and (\ref{extn+1}) can be written as
\begin{align*}
\delta_{\mathrm{Hoch}} (\mu_{n+1}) (a, b, c) =~& \sum_{p+q = n+1, p,q > 0} \mu_p ( \mu_q (a, b), c) - \mu_p (a, \mu_q (b, c)) \quad ( = \mathrm{Ob}(a,b,c) ~~~ \text{ say}) \\
(\delta_{\mathrm{Hoch}}' (d_{k, n+1}) + \delta_k \mu_{n+1}) (a, b) =~& \sum_{p+q = n+1, p, q >0} d_{k, p} ( \mu_q (a,b)) - \sum_{i+j = k} \sum_{p+q+r = n+1, p, q, r >0} \mu_p ( d_{i,q}(a), d_{j, r}(b)) \\ & \quad ( = \mathrm{Ob}_k (a,b) ~~~ \text{ say})
\end{align*}
for $k = 1, \ldots, N.$

\begin{prop}\label{3-thm}
The tuple $(\mathrm{Ob}; \mathrm{Ob}_1, \ldots, \mathrm{Ob}_N )$ is a $3$-cocycle in the cohomology of the AssHDer pair $A$ with coefficients in itself.
\end{prop}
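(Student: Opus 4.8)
The plan is to verify directly that $\partial(\mathrm{Ob};\mathrm{Ob}_1,\ldots,\mathrm{Ob}_N)=0$, i.e., that $\delta_{\mathrm{Hoch}}(\mathrm{Ob})=0$ and $\delta_{\mathrm{Hoch}}'(\mathrm{Ob}_k)+\delta_k(\mathrm{Ob})=0$ for each $k$. The cleanest route is to exploit the Gerstenhaber-type bracket $[~,~]_N$ introduced in the Remark, under which $\delta_k f = -[d_k,f]_N$ and $\delta_{\mathrm{Hoch}}'(f_k)=(-1)^{n-1}[\mu,f_k]_N$, while $\delta_{\mathrm{Hoch}}(\,\cdot\,)=\pm[\mu,\cdot\,]$ is the usual Hochschild differential. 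In this language the deformation relations \eqref{n-def-eqn-1}--\eqref{n-def-eqn-2} for $s=0,\ldots,n$ read $\tfrac12\sum_{p+q=s,\,p,q>0}[\mu_p,\mu_q]=-[\mu,\mu_s]$ and $\sum_{p+q=s,\,p,q>0}[d_{k,p},\mu_q]_N=-[d_k,\mu_s]_N+[\mu,d_{k,s}]_N$, and one has $\mathrm{Ob}=\tfrac12\sum_{p+q=n+1,\,p,q>0}[\mu_p,\mu_q]$ and (up to sign) $\mathrm{Ob}_k=\sum_{p+q=n+1,\,p,q>0}[d_{k,p},\mu_q]_N$.

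First I would treat the associative part: $\delta_{\mathrm{Hoch}}(\mathrm{Ob})=0$ is exactly Gerstenhaber's classical computation that the obstruction to extending an $n$-th order deformation of an associative algebra is a Hochschild $3$-cocycle; it follows from the graded Jacobi identity for $[~,~]$ together with the lower-order relations $[\mu,\mu_s]=-\tfrac12\sum_{p+q=s,\,p,q>0}[\mu_p,\mu_q]$, since $[\mu,\mathrm{Ob}]=\tfrac12\sum_{p+q=n+1}[\mu,[\mu_p,\mu_q]]$ and the $\mu_p$-terms cancel in pairs after invoking Jacobi and resubstituting. I will cite \cite{gers} for this.

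Next comes the mixed part, $\delta_{\mathrm{Hoch}}'(\mathrm{Ob}_k)+\delta_k(\mathrm{Ob})=0$, which is the genuinely new computation. Writing everything in terms of brackets, this amounts to
\[
[\mu,\ \textstyle\sum_{p+q=n+1,\,p,q>0}[d_{k,p},\mu_q]_N] \;=\; [d_k,\ \textstyle\sum_{p+q=n+1,\,p,q>0}[\mu_p,\mu_q]_N]
\]
up to an overall sign, which I would establish by: (i) expanding the left side with the graded Jacobi identity for $[~,~]_N$, producing terms $[[\mu,d_{k,p}]_N,\mu_q]_N$ and $[d_{k,p},[\mu,\mu_q]_N]_N$; (ii) substituting the order-$q$ and order-$p$ relations \eqref{n-def-eqn-2} and \eqref{n-def-eqn-1} to rewrite $[\mu,d_{k,p}]_N$ and $[\mu,\mu_q]_N$ in terms of lower-bracket expressions; (iii) reindexing the resulting triple sums and checking that all purely $\mu$-and-$d_{k,\bullet}$ terms either cancel among themselves or reassemble into $[d_k,\sum[\mu_p,\mu_q]_N]_N$. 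I expect the main obstacle to be bookkeeping: one must keep careful track of the signs $(-1)^{(m-1)(n-1)}$ in the definition of $[~,~]_N$ and of the precise form of the graded Jacobi identity for this bracket (which, strictly, is only asserted to ``satisfy similar properties'' in the Remark), and one must correctly separate the $p,q>0$ restricted sums from the unrestricted ones so that the $s=0$ terms $[\mu,\cdot\,]$ and $[d_k,\cdot\,]$ appear with the right coefficients. To keep the proof readable I would state the two bracket identities (graded Jacobi for $[~,~]$ and for $[~,~]_N$, and the compatibility $[d_k,[\mu,-]]\pm[\mu,[d_k,-]]=0$ coming from $d_k$ being a derivation of $\mu$) as a preliminary, then run the formal manipulation above, and finally translate back to the explicit cochain formulas for $\mathrm{Ob}$ and $\mathrm{Ob}_k$ to confirm $\partial(\mathrm{Ob};\mathrm{Ob}_1,\ldots,\mathrm{Ob}_N)=0$.
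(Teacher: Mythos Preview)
Your proposal is correct and follows essentially the same route as the paper: cite Gerstenhaber for $\delta_{\mathrm{Hoch}}(\mathrm{Ob})=0$, then rewrite the mixed component as $-[\mu,\mathrm{Ob}_k]_N+[d_k,\mathrm{Ob}]_N$, expand via the graded Jacobi identity for $[\,\cdot\,,\cdot\,]_N$, substitute the order-$s$ relations \eqref{n-def-eqn-1}--\eqref{n-def-eqn-2}, and watch the triple sums cancel. One small correction: the $k$-th component of $\partial$ at degree~$3$ is $\delta_{\mathrm{Hoch}}'(\mathrm{Ob}_k)+(-1)^3\delta_k(\mathrm{Ob})$, not $+\delta_k(\mathrm{Ob})$, so fix that sign when you write up the computation (you already flagged this bookkeeping as the main hazard).
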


\begin{proof}
It is known from the deformations of associative algebras \cite{gers} that $\mathrm{Ob}$ is a Hochschild $3$-cocycle, i.e. $\delta_{\mathrm{Hoch}} ( \mathrm{Ob}) = 0$. We also have
\begin{equation*}
\begin{split}
&\delta_{\mathrm{Hoch}}' (\mathrm{Ob}_k) + (-1)^3~ \delta_k (\mathrm{Ob}) \\
&= - [\mu, \mathrm{Ob}_k]_N + [d_k, \mathrm{Ob}]_N \\
&= - \sum_{i+j = n+1, i, j > 0} [\mu, [d_{k,i}, \mu_j ]_N ]_N + \frac{1}{2} \sum_{i+j = n+1, i, j \geq 0} [d_k, [\mu_i, \mu_j]]_N \\
&= - \sum_{i+j = n+1, i, j > 0} \big(  [[\mu, d_{k,i}]_N , \mu_j] + [d_{k,i}, [\mu, \mu_j]]_N    \big) + \frac{1}{2} \sum_{i+j = n+1, i, j > 0} \big(  [[d_k, \mu_i]_N, \mu_j] + [\mu_i, [d_k, \mu_j]_N ]   \big) \\
&= - \sum_{i+j = n+1, i, j > 0} \big(  [[\mu, d_{k,i}]_N , \mu_j] + [d_{k,i}, [\mu, \mu_j]]_N    \big) + \sum_{i+j = n+1, i, j > 0} [[d_k, \mu_i]_N , \mu_j] \\
&= - \sum_{\begin{array}{c}
{i+j = n+1},\\{ i, j > 0}\end{array}
} \big(  [[\mu, d_{k,i}]_N, \mu_j]      - [[d_k, \mu_i]_N, \mu_j]          ) + \frac{1}{2} \sum_{\begin{array}{c}
{i+ j' + j'' = n+1},\\ {i, j', j'' > 0}\end{array}
} [d_{k,i}, [\mu_{j'}, \mu_{j''}]]_N ~~  \quad (\mathrm{by }~~ (\ref{n-def-eqn-1}))\\
&= - \sum_{i'+ i'' + j = n+1, i', i'', j > 0} [[d_{k, i'}, \mu_{i''} ]_N , \mu_j ] - \sum_{i+j = n+1, i, j >0} ( [[d_k, \mu_i]_N, \mu_j] - [[ d_k , \mu_i]_N , \mu_j] ) \\
& \quad + \frac{1}{2} \sum_{i+ j' + j'' = n+1, i, j', j'' >0} ( [[d_{k,i}, \mu_{j'}]_N, \mu_{j''}] +[ \mu_{j'}, [ d_{k,i}, \mu_{j''} ]_N ] ) \qquad (\mathrm{by }~~ (\ref{n-def-eqn-2}))\\
&= - \sum_{\begin{array}{c}
{i' + i'' + j = n+1},\\{ i', i'', j > 0}\end{array}
} [[d_{k, i'}, \mu_{i''}]_N , \mu_j] + \sum_{\begin{array}{c}
{i+j' + j'' = n+1},\\ { i, j', j'' > 0}\end{array}
} [[ d_{k,i}, \mu_{j'}]_N, \mu_{j''}] = 0.
\end{split}
\end{equation*}

Thus 
\begin{align*}
\partial (\mathrm{Ob};~ \mathrm{Ob}_1, \ldots, \mathrm{Ob}_N ) = ( \delta_{\mathrm{Hoch}} \mathrm{Ob} ;~ \delta_{\mathrm{Hoch}}' ( \mathrm{Ob}_1 ) + (-1)^3 \delta_1 ( \mathrm{Ob}), \ldots, \delta_{\mathrm{Hoch}}' ( \mathrm{Ob}_N ) + (-1)^3 \delta_N ( \mathrm{Ob}) ) = 0.
\end{align*}
Hence the proof.
\end{proof}

Therefore we have $[(\mathrm{Ob}, \mathrm{Ob}_1, \ldots, \mathrm{Ob}_N )] \in H^3_{\mathrm{AssHDer}} (A, A)$. If this cohomology class is trivial, then there exists a $2$-cocycle $(\mu_{n+1}; d_{1, n+1}, \ldots, d_{N, n+1}) \in C^2_{\mathrm{AssHDer}} (A, A)$ such that
\begin{align*}
\partial (\mu_{n+1}; d_{1, n+1}, \ldots, d_{N, n+1}) = (\mathrm{Ob}; \mathrm{Ob}_1, \ldots, \mathrm{Ob}_N ).
\end{align*}
Then by the previous discussion $(\mu_t' = \mu_t + t^{n+1} \mu_{n+1}; d_{1,t}' = d_{1,t} + t^{n+1} d_{1, n+1}, \ldots, d_{N, t}' = d_{N, t} + t^{n+1} d_{N, n+1})$ is a deformation of order $n+1$. Thus, the deformation $(\mu_t; d_{1, t}, \ldots, d_{N, t})$ is extensible. Conversely, if the deformation $(\mu_t; d_{1, t}, \ldots, d_{N, t})$ is extensible, then the obstruction cocycle is given by the coboundary $\partial (\mu_{n+1}; d_{1, n+1}, \ldots, d_{N, n+1})$. Hence the corresponding cohomology class is null.

\begin{thm}
Let $(\mu_t; d_{1, t}, \ldots, d_{N, t})$ be a deformation of order $n$ of the AssHDer pair $(A, d_{1}, \ldots, d_N)$. It is extensible if and only if the corresponding obstruction class $[(\mathrm{Ob}; \mathrm{Ob}_1, \ldots, \mathrm{Ob}_N )]$ is null.
\end{thm}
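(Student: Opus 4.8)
The plan is to show that extensibility is equivalent to the vanishing of the obstruction class, and this amounts essentially to unwinding the definition of the obstruction cochain $(\mathrm{Ob}; \mathrm{Ob}_1, \ldots, \mathrm{Ob}_N)$ together with the computation already carried out in Proposition \ref{3-thm}. First I would recall that a deformation of order $n$ extends to order $n+1$ precisely when one can find a $2$-cochain $(\mu_{n+1}; d_{1,n+1}, \ldots, d_{N,n+1}) \in C^2_{\mathrm{AssHDer}}(A, A)$ for which the identities (\ref{ext-n+1}) and (\ref{extn+1}) hold at level $s = n+1$. By the rewriting displayed just before Proposition \ref{3-thm}, those identities are exactly
\begin{align*}
\delta_{\mathrm{Hoch}}(\mu_{n+1}) = \mathrm{Ob} \qquad \text{and} \qquad \delta_{\mathrm{Hoch}}'(d_{k,n+1}) + \delta_k \mu_{n+1} = \mathrm{Ob}_k, \quad k = 1, \ldots, N,
\end{align*}
which, in view of the formula for $\partial$ on $2$-cochains and the sign $(-1)^n$ with $n=2$, is equivalent to the single equation $\partial(\mu_{n+1}; d_{1,n+1}, \ldots, d_{N,n+1}) = (\mathrm{Ob}; \mathrm{Ob}_1, \ldots, \mathrm{Ob}_N)$.

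From this reformulation both directions follow immediately. For the forward direction, if the deformation is extensible then such a $2$-cochain exists, so $(\mathrm{Ob}; \mathrm{Ob}_1, \ldots, \mathrm{Ob}_N)$ is a coboundary, and since it is a $3$-cocycle by Proposition \ref{3-thm}, its class $[(\mathrm{Ob}; \mathrm{Ob}_1, \ldots, \mathrm{Ob}_N)] \in H^3_{\mathrm{AssHDer}}(A, A)$ is zero. For the converse, if the obstruction class vanishes then $(\mathrm{Ob}; \mathrm{Ob}_1, \ldots, \mathrm{Ob}_N) = \partial(\mu_{n+1}; d_{1,n+1}, \ldots, d_{N,n+1})$ for some $2$-cochain, and unwinding $\partial$ again shows precisely that $(\mu_t + t^{n+1}\mu_{n+1}; d_{1,t} + t^{n+1} d_{1,n+1}, \ldots, d_{N,t} + t^{n+1} d_{N,n+1})$ satisfies the order-$(n+1)$ deformation equations, hence is the desired extension.

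I expect the only genuine subtlety to be bookkeeping: one must be careful that the cochain-level identities recorded before Proposition \ref{3-thm} — in particular the constraints $p, q > 0$ (respectively $p, q, r > 0$) in the definitions of $\mathrm{Ob}$ and $\mathrm{Ob}_k$ — are exactly what is left over after subtracting the $s = 0$ terms and the terms linear in $\mu_{n+1}$ and $d_{k,n+1}$ from the full order-$(n+1)$ equations, so that the remaining pieces $\delta_{\mathrm{Hoch}}(\mu_{n+1})$ and $\delta_{\mathrm{Hoch}}'(d_{k,n+1}) + \delta_k \mu_{n+1}$ assemble into $\partial$ applied to the $2$-cochain. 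Once that identification is in place, as it already was in the discussion preceding the theorem, no further computation is needed. Since that discussion has effectively been given in the paragraph immediately before the statement, the proof can simply record: ``The result follows from the discussion above.''
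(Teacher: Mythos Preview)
Your proposal is correct and follows essentially the same route as the paper: the theorem is stated without a separate proof because the paragraph immediately preceding it already records that the order-$(n+1)$ equations are equivalent to $\partial(\mu_{n+1}; d_{1,n+1}, \ldots, d_{N,n+1}) = (\mathrm{Ob}; \mathrm{Ob}_1, \ldots, \mathrm{Ob}_N)$, from which both implications are immediate. Your only addition is making explicit the sign check $(-1)^2 = +1$ in the formula for $\partial$ on $2$-cochains, which the paper leaves implicit; otherwise the arguments coincide.
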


\begin{thm}\label{final-thm}
If $H^3_{\mathrm{AssHDer}} (A, A) = 0$ then every finite order deformation of $A$ is extensible.
\end{thm}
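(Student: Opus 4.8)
The plan is to derive this as an immediate corollary of Proposition \ref{3-thm} together with the extensibility criterion established just above it. First I would fix an arbitrary deformation $(\mu_t; d_{1,t}, \ldots, d_{N,t})$ of order $n$ of the AssHDer pair $(A, d_1, \ldots, d_N)$ and form its obstruction tuple $(\mathrm{Ob}; \mathrm{Ob}_1, \ldots, \mathrm{Ob}_N) \in C^3_{\mathrm{AssHDer}}(A, A)$ as in the discussion preceding Proposition \ref{3-thm}. By that proposition this tuple is a $3$-cocycle, so it determines a class $[(\mathrm{Ob}; \mathrm{Ob}_1, \ldots, \mathrm{Ob}_N)] \in H^3_{\mathrm{AssHDer}}(A, A)$.

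Next I would invoke the hypothesis $H^3_{\mathrm{AssHDer}}(A, A) = 0$ to conclude that this obstruction class vanishes. Unwinding what that means, there exists a $2$-cochain $(\mu_{n+1}; d_{1,n+1}, \ldots, d_{N,n+1}) \in C^2_{\mathrm{AssHDer}}(A, A)$ with $\partial(\mu_{n+1}; d_{1,n+1}, \ldots, d_{N,n+1}) = (\mathrm{Ob}; \mathrm{Ob}_1, \ldots, \mathrm{Ob}_N)$. Reading off the components of $\partial$ and comparing with the definitions of $\mathrm{Ob}$ and of the $\mathrm{Ob}_k$, this is precisely the assertion that the order-$(n+1)$ deformation equations (\ref{ext-n+1}) and (\ref{extn+1}) hold for the truncated sums $\mu_t + t^{n+1}\mu_{n+1}$ and $d_{k,t} + t^{n+1} d_{k,n+1}$. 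Hence $(\mu_t + t^{n+1}\mu_{n+1}; d_{1,t} + t^{n+1} d_{1,n+1}, \ldots, d_{N,t} + t^{n+1} d_{N,n+1})$ is a deformation of order $n+1$, so the original deformation is extensible. Since $n$ and the deformation were arbitrary, this proves the theorem; iterating the argument moreover shows that every finite order deformation extends all the way to a formal one-parameter deformation.

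There is no genuine obstacle here: all the substantive work, namely verifying that the obstruction tuple is a cocycle, is already packaged in Proposition \ref{3-thm}, and the present statement is a purely formal consequence of it and the vanishing hypothesis. The only points that merit (minor) care are the bookkeeping identification of $\partial(\mu_{n+1}; d_{1,n+1}, \ldots, d_{N,n+1})$ with $(\mathrm{Ob}; \mathrm{Ob}_1, \ldots, \mathrm{Ob}_N)$, i.e. matching the cochain differential against the order-$(n+1)$ deformation equations, and, should one want the strengthened conclusion, checking that the successive one-step extensions are mutually compatible so that the limiting power series is well defined; this is automatic, since each step introduces terms only in a new degree in $t$ and leaves all lower-order terms unchanged.
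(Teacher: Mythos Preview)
Your proposal is correct and mirrors the paper's own argument: the paper states Theorem~\ref{final-thm} without separate proof, deriving it immediately from Proposition~\ref{3-thm} and the preceding extensibility discussion (equivalently, the if-and-only-if criterion just before it). Your additional remark about iterating to reach a formal deformation is also in line with the paper, which records that consequence as the subsequent corollary.
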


\begin{corollary}
If $H^3_{\mathrm{AssHDer}} (A, A) = 0$ then every $2$-cocycle in the cohomology of $A$ is the infinitesimal of a formal deformation of $A$.
\end{corollary}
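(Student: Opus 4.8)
The plan is to construct a formal one-parameter deformation order by order, taking the prescribed $2$-cocycle as the linear term and using the vanishing of $H^3_{\mathrm{AssHDer}}(A,A)$ to annihilate every obstruction along the way.

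First I would observe that a $2$-cocycle $(\mu_1; d_{1,1}, \ldots, d_{N,1}) \in C^2_{\mathrm{AssHDer}}(A,A)$ is exactly the data of a deformation of order $1$ with that infinitesimal. Indeed, unwinding $\partial(\mu_1; d_{1,1}, \ldots, d_{N,1}) = 0$ gives $\delta_{\mathrm{Hoch}}(\mu_1) = 0$ together with $\delta_{\mathrm{Hoch}}'(d_{k,1}) + \delta_k(\mu_1) = 0$ for $k = 1, \ldots, N$, which are precisely equations (\ref{1-co}) and (\ref{1-cod}); since the order-$0$ equations hold because $(A, d_1, \ldots, d_N)$ is an AssHDer pair, the truncation $(\mu + t\mu_1;\ d_1 + t d_{1,1}, \ldots, d_N + t d_{N,1})$ is a deformation of order $1$ whose infinitesimal is the given cocycle.

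Next I would run the induction. Suppose a deformation of order $n$ extending the order-$1$ datum has been built. By Proposition \ref{3-thm} its obstruction tuple $(\mathrm{Ob}; \mathrm{Ob}_1, \ldots, \mathrm{Ob}_N)$ is a $3$-cocycle; since $H^3_{\mathrm{AssHDer}}(A,A) = 0$ it is a coboundary, so there is a $2$-cochain $(\mu_{n+1}; d_{1,n+1}, \ldots, d_{N,n+1})$ with $\partial(\mu_{n+1}; d_{1,n+1}, \ldots, d_{N,n+1}) = (\mathrm{Ob}; \mathrm{Ob}_1, \ldots, \mathrm{Ob}_N)$, and as explained in the discussion preceding Theorem \ref{final-thm}, adjoining $t^{n+1}\mu_{n+1}$ and $t^{n+1} d_{k,n+1}$ produces a deformation of order $n+1$. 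Crucially, this extension does not alter the coefficients of $t^0, \ldots, t^n$, so the linear term is preserved at every stage of the induction.

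Finally I would assemble the pieces: the coefficients $\mu_i$ and $d_{k,i}$ stabilize as $n \to \infty$, yielding formal power series $\mu_t = \mu + \sum_{i \geq 1} t^i \mu_i$ and $d_{k,t} = d_k + \sum_{i \geq 1} t^i d_{k,i}$ satisfying all deformation equations, i.e.\ a formal one-parameter deformation of $(A, d_1, \ldots, d_N)$ whose infinitesimal is the prescribed $2$-cocycle. The only point requiring care — and the nearest thing to an obstacle — is the bookkeeping in this inductive limit: one must verify that passing from order $n$ to order $n+1$ introduces new terms only in degree $t^{n+1}$, so that the limiting series is well defined and its truncations are exactly the finite-order deformations constructed en route.
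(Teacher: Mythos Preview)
Your proposal is correct and follows exactly the argument the paper intends: the corollary is stated without proof because it is an immediate consequence of Theorem \ref{final-thm} together with the observation that a $2$-cocycle is the same thing as a deformation of order $1$. Your write-up simply makes explicit the inductive construction and the passage to the formal limit that the paper leaves implicit.
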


\section{Conclusions}
In this paper, we study associative algebras equipped with higher derivations. We call such a tuple of an associative algebra and a higher derivation by AssHDer pair. We define a suitable cohomology for an AssHDer pair with coefficients in a representation. This cohomology is related to extensions of AssHDer pair and govern deformations of the structure.

Deformations of associative algebras can be generalized to operads with multiplications \cite{das-loday}. Various Loday-type algebras (e.g. dendriform algebras, tridendriform algebras, associative dialgebras, quadri algebras) can be defined in terms of operad with multiplications. Therefore, deformations of such algebras are particular cases. See \cite{yau, das-loday} for cohomology of these Loday-type algebras. Given an operad $\mathcal{O}$ with a multiplication $m \in \mathcal{O}(2)$ (i.e. $m$ satisfies $m \circ_1 m =  m \circ_2 m)$, a sequence $\{d_1, \ldots, d_N \}$ of elements in $\mathcal{O}(1)$ is called a higher derivation of rank $N$ for $m$ if they satisfy
\begin{align*}
d_k \circ m = \sum_{i+j = k} m \circ (d_i , d_j ), ~~~ \text{ for } k =1, \ldots, N.
\end{align*}
The method of the present paper can be generalized to study deformations of a tuple $(m, d_1, \ldots, d_N)$. Therefore, one may deduce deformations of Loday-type algebras equipped with higher derivations.

Deformations of algebras over a quadratic operad $\mathcal{P}$ are studied in \cite{bala}. In \cite{loday} the author defines derivation in a $\mathcal{P}$-algebra. In a subsequent paper, we aim to define higher derivations on $\mathcal{P}$-algebras and construct explicit cohomology and deformation theory of $\mathcal{P}$-algebras with higher derivations.

\begin{remark}
Like derivations are generalization of usual derivative, Rota-Baxter operators on associative algebras are generalization of the integral operator. Let $A$ be an associative algebra. A linear map $R : A \rightarrow A$ is called a Rota-Baxter operator (of weight $0$) on $A$ if $R$ satisfies
\begin{align*}
R(a ) R(b) = R (a R(b) + R(a) b), ~~~ \text{ for } a, b \in A.
\end{align*}
Free Rota-Baxter algebras over a vector space was defined in \cite{fard-guo} using rooted trees. Deformations of Rota-Baxter algebras are studied in \cite{das}. See \cite{guo-book} for more details about Rota-Baxter algebras. By generalizing higher integral operators,
one may define the notion of higher Rota-Baxter operator of rank $N$. More precisely, a higher Rota-Baxter operator of rank $N$ consists of a sequence of linear maps $\{R_1, \ldots, R_N \}$ satisfying the following identities
\begin{align*}
R_k (a) R_k (b ) = R_k ( \sum_{i+j = k} R_i (a) R_j (b) ), ~~ \text{ for } k = 1, \ldots, N
\end{align*}
with the convention that $R_0 = \mathrm{id}_A$. If $R$ is a Rota-Baxter operator on $A$ then it is easy to see that $\{ R, \frac{R^2}{2!}, \ldots, \frac{R^N}{N!} \}$ is a higher Rota-Baxter operator of rank $N$.

In \cite{guo-keigher} the authors also consider algebras with both differential operators and integral operators. More precisely, a differential Rota-Baxter algebra (of weight $0$) is an algebra $A$ with a derivation $d$ and a Rota-Baxter operator $R$ such that $d \circ R = \mathrm{id}_A$. In a similar way, we may define higher differential Rota-Baxter algebras. A higher differential Rota-Baxter algebra of rank $N$ is an algebra $A$ with a higher derivation $\{ d_1, \ldots, d_N \}$ and a higher Rota-Baxter operator $\{R_1, \ldots, R_N \}$ satisfying $d_k \circ R_k = \frac{1}{(k!)^2} \mathrm{id}_A$. In future, we come with more structural properties of higher Rota-Baxter operators, higher differential Rota-Baxter algebras and the corresponding free objects.

\end{remark}

\noindent {\em Acknowledgements.} The author is thankful to Indian Institute of Technology Kanpur for financial support.

\end{document}